\documentclass[11pt,reqno]{amsart}

\synctex=1
\usepackage{amscd,amssymb,amsmath,amsthm}
\usepackage{graphicx}
\usepackage{color}
\usepackage{cite}
\topmargin=0.1in \textwidth5.8in \textheight7.7in
\usepackage[shortlabels]{enumitem}

\newtheorem{thm}{Theorem}

\newtheorem{lemma}{Lemma}
\newtheorem{pro}{Proposition}
\newtheorem{rk}{Remark}
\newtheorem{cor}{Corollary}
\newtheorem{con}{Conjecture}

\numberwithin{equation}{section} \setcounter{tocdepth}{1}


\def\l{\lambda}

\def\R{\mathbb{R}}

\def\Z{\mathbb{Z}}


\begin{document}
\title[Hard and Soft-Core Widom-Rowlinson models]{
Hard-Core and Soft-Core Widom-Rowlinson models on Cayley trees
}

\author{S. Kissel, C. K\"ulske, U. A. Rozikov}

\address{S. Kissel\\ Fakult\"at f\"ur Mathematik,
Ruhr-University of Bochum, Postfach 102148,\,
44721, Bochum,
Germany.}
\email{sascha.kissel@ruhr-uni-bochum.de}

\address{C.\ K\"ulske\\ Fakult\"at f\"ur Mathematik,
Ruhr-University of Bochum, Postfach 102148,\,
44721, Bochum,
Germany.}
\email {Christof.Kuelske@ruhr-uni-bochum.de}

\address{U.\ A.\ Rozikov\\ Institute of mathematics,
81, Mirzo Ulug'bek str., 100125, Tashkent, Uzbekistan.}
\email {rozikovu@yandex.ru}

\begin{abstract} We consider both Hard-Core and Soft-Core Widom-Rowlinson models
with spin values  $-1,0,1$ on a Cayley tree of order $k\geq 2$ and we are interested
in the Gibbs measures of the models.
The models depend on 3 parameters: the order $k$ of the tree, $\theta$
describing the strength of the (ferromagnetic or antiferromagnetic) interaction, and
$\lambda$ describing the intensity for particles. The Hard-Core
Widom-Rowlinson model corresponds to the case $\theta=0$. \\
For the binary tree $k=2$, and for $k=3$ we prove that the ferromagnetic model
has either one or three splitting Gibbs measures
(tree-automorphism invariant Gibbs measures (TISGM)
which are tree-indexed Markov chains). We also give the exact form of the
corresponding  critical curves $\lambda_{\rm cr}(k,\theta)$ in parameter space.
For higher values of $k$ we give an explicit
sufficient bound ensuring non-uniqueness which we conjecture to be the
exact curve.
Moreover, for the antiferromagnetic model we explicitly give two 
critical curves $\lambda_{{\rm cr},i}(k,\theta)$, $i=1,2$, and prove that on these curves there are exactly 
two TISGMs; between these curves there are exactly three TISGMs; otherwise there exists a unique TISGM. 
Also some periodic and non-periodic SGMs are constructed in the ferromagnetic model. 
\end{abstract}
\maketitle

{\bf Mathematics Subject Classifications (2010).} 82B26 (primary);
60K35 (secondary)

{\bf{Key words.}} Widom-Rowlinson model, temperature, Cayley tree, 
Gibbs measure, boundary law, extreme measure.

\section{Introduction}
The Widom-Rowlinson model has been introduced 
	by \cite{Wi}
	as a model for point particles which carry charges plus or minus one, 
	with positions in Euclidean space. In the original Hard-Core version
	the interaction strictly forbids particles of opposite signs to becomes closer than a fixed 
	radius. The continuum Widom-Rowlinson model shows a provable phase 
	transition at high enough equal intensity for plus and minus particles. The equilibrium properties 
	have been investigated in \cite{CCK}, \cite{Ru}, \cite{Wi}. For 
	the behavior  under stochastic spin-flip dynamics with a view to Gibbs-non Gibbs transitions, 
	see  \cite{JK}. 
	Related versions of such Hard-Core models, 
	have be studied on lattices (see \cite{GL},
	\cite{HT}, \cite{MSSZ}, \cite{MSS}). 
	The Hard-Core constraint for a Widom-Rowlinson model 
	on a lattice, or a graph, like a tree, means that particles of 
	opposite signs are forbidden to appear next to each other on neighboring sites of the graph. 
	
	Studies of multicolor hardcore models with rich classes of interactions on trees can be found in \cite{Ro}.
	When the Hard-Core constraint is relaxed, we come to Soft-Core models, which are more difficult 
	to analyze as they are depending on another parameter,  
	which governs the strength of the repulsion between particles of opposite signs (in the ferromagnetic case) 
	or the attraction (in the antiferromagnetic case).

Let us more specifically to the model on trees. 
The Hard-Core Widom-Rowlinson (HCWR) model considered in a part of 
this paper is identical to the
hinge constraint model of \cite{Br1} (see also \cite{Br4}, \cite{XR1}, \cite{MRS},
\cite{RSh}-\cite{RKh}, \cite{Wh}). In these papers the tree automorphism invariant splitting Gibbs measures (TISGMs) 
are investigated on the Cayley tree
of order $k\geq 2$, transition temperatures are computed, and also some periodic and non-periodic splitting
Gibbs measures are constructed. 

%

The methods of these papers were based on the description of boundary laws  which are in one-to-one
correspondence with the splitting Gibbs measures. 
The boundary laws of WR-model are two dimensional vectors with positive entries which satisfy a
non-linear fixed-point equation (tree recursion).  A given boundary law defines
the transition matrix of the corresponding tree-indexed 
Markov chain (see \cite[Chapter 12]{Ge} and \cite{Ro} for detailed definitions). All extremal Gibbs measures are splitting Gibbs measures (see \cite[Theorem 12.6]{Ge}), 
therefore,  if  there is only one splitting  Gibbs
measure, then there is only one Gibbs measure of any kind.
To decide the converse, namely whether a given Gibbs measure 
is extremal, is a separate difficult problem (see \cite{FK},\cite{KR},\cite{LN},\cite{Sl}).

In this paper we focus on the study of tree-automorphism invariant 
splitting Gibbs measures for the Soft-Core version of the Widom-Rowlinson model (SCWR). 
We review also some of the Hard-Core results which are rediscovered as special cases.

The paper is organized as follows. In Section 2 we give the main definitions and formulation of the problem. Section 3 contains
a compatibility condition, i.e an equation for boundary laws. Section 4 is devoted to TISGMs and we divide this section
to several subsections under some conditions on the parameters of the model. For any $k\geq 2$ we give explicit regions of
parameters of non-uniqueness of TISGMs.  In Section 5, for $k=2$, and $3$ we give upper and lower bounds of the
boundary laws. The maximal and minimal boundary laws then define extreme TISGMs. Sections 6 and 7 are devoted to some periodic and non-periodic splitting GMs.

\section{Definitions and formulation of the problem}
		
	Let $\mathbb{T}^k$, $k\geq 1$ be a rooted Cayley tree. Let $d(i,j)$, $i,j\in \mathbb T^k$7
the distance between vertices $i,j$, i.e. the number of edges of the shortest path connecting $i$ and $j$.

By $0$ we denote the root of the tree and define
$$W_n=\{j \in \mathbb{T}^k\,:\, d(0,j)=n\}, \ \
V_n = \cup_{j=0}^n  W_j.$$
The set $S(i)$ is the set of direct successors
of a vertex $i\in \mathbb T^k$, i.e., for $i\in W_n$ we have
$$S(i)=\{j\in W_{n+1}: d(i,j)=1\}.$$
See \cite[Chapter 1]{Ro} for algebraic properties of the Cayley tree.

 The configuration space is given by $\Omega := \{-1,0,1\}^{\mathbb{T}^k}$.
 We denote elements of $\Omega$ by $\omega$, $\sigma$, etc.
Thus a configuration is a function $\omega: i\in \mathbb T^k\to \omega_i\in \{-1,0,1\}$.

Denote by $\Omega_{A}$ the set of all configurations
on the set $A\subset \mathbb T^k$.

The Hamiltonian for the SCWR-model of step $n$, i.e. on the configuration set $\Omega_{V_n}$, is given by
	\begin{align*}
	H^{sc}_n(\omega) = J \sum_{\{i,j\}\in L_n} \mathbf{1}(\omega_i\omega_j =-1) - {\ln(\lambda)\over \beta} \sum_{i\in V_n} \omega_i^2 .
	\end{align*}
where $\beta={1\over T}$ and $T>0$ is the temperature.	
The parameter $J\in \mathbb R$ can be seen as repulsion or attraction between particles of different charges depending on the sign of $J$ and $\lambda>0$ as an activity.

	The associated finite volume Gibbs measure on $\Omega_{V_n}$ with external
fields on the boundary, $$\{h^i=(h_{-1,i},h_{0,i},h_{1,i}) \in \mathbb{R}^3 \,:\, i\in \mathbb{T}^k\} $$ is defined by
	\begin{align*}
	\mu_{n,\beta}(\omega) = \frac{1}{Z_n}\exp\left(-\beta H^{sc}_n(\omega) +\sum_{i\in W_n} h_{\omega_i,i}\right).
	\end{align*}
The sequence $\mu_{n,\beta}$, $n\geq 1$ is said to be compatible
if for all $n\geq2$ and all $\sigma^{n-1}\in \Omega_{V_{n-1}}$
	\begin{align*}
	\sum_{\omega^n\in \Omega_{W_n}} \mu_{n,\beta}(\sigma^{n-1}\omega^n)  = \mu_{n-1,\beta}(\sigma^{n-1})
	\end{align*}
	holds. Here

$$(\sigma^{n-1}\omega^n)_i=\left\{\begin{array}{ll}
\sigma^{n-1}_i, \ \ \mbox{if} \ \ i\in V_{n-1}\\[2mm]
\omega^n_i, \ \ \mbox{if} \ \ i\in W_n
\end{array}.\right.
$$

For a sequence of compatible finite volume Gibbs measures $(\mu_{n,\beta})_n$, by Kolmogorov's extension theorem,
 there exists a unique measure $\mu$ defined on the whole tree with
 $\mu(\sigma\big|_{V_n}=\sigma^n)= \mu_{n,\beta}(\sigma^n)$ .
Following \cite{Ro12}, we call $\mu$ a splitting Gibbs measure.

 The finite volume Gibbs measure for the Hard-Core WR-model
 we get for $J>0$ by $\lim_{\beta\rightarrow\infty} \mu^{sc}_{n,\beta} = \mu^{hc}_{n}$.

 \section{Compatibility equations}
 The following theorem gives conditions to make the finite volume Gibbs measures
 compatible.  The proof is included for convenience of the reader, the compatibility relations 
 can also be obtained by 
 an application of  Theorem 12.12 of \cite{Ge}, together with Definition 12.10, to our model. 
 
	\begin{thm}\label{cc}
		Let $0<\beta<\infty$, $J\in \mathbb R$ and $\lambda>0$.
		The sequence of probability measures $(\mu^{sc}_{n,\beta})_n$ is compatible if and only if for any $x\in \mathbb T^k$ the following two equations hold
		\begin{equation}\label{eq: comp}
\begin{array}{ll}
		\tilde h_{+,i} = \ln(\lambda)+\sum_{j\in S(i)} f(\tilde h_{+,j},\tilde h_{-,j},\theta)\\[3mm]
\tilde h_{-,i} = \ln(\lambda)+\sum_{j\in S(i)} f(\tilde h_{-,j},\tilde h_{+,j},\theta),
\end{array}
		\end{equation}
 where $\theta=\exp(-J\beta)$,  $f(x,y,\theta)= \ln(\frac{1+e^{x}+\theta e^{y}}{1+e^{x}+e^{y}})$
and $\tilde h_{\pm,j}= \ln(\lambda)+h_{\pm1,j}-h_{0,j}$.
	\end{thm}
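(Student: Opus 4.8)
The plan is to establish the biconditional by the standard route for Gibbs measures on trees: translate the compatibility identity $\sum_{\omega^n}\mu_{n,\beta}(\sigma^{n-1}\omega^n)=\mu_{n-1,\beta}(\sigma^{n-1})$ into a functional equation for the boundary fields $h^i$, and then reparametrize to obtain the clean form \eqref{eq: comp}. First I would write out $\mu_{n,\beta}(\sigma^{n-1}\omega^n)$ explicitly using the definition of $H_n^{sc}$. The key observation is that the Hamiltonian of step $n$ differs from that of step $n-1$ only through the edges $L_n\setminus L_{n-1}$, which are exactly the edges from each $i\in W_{n-1}$ to its successors $j\in S(i)$, and through the single-site terms $\sum_{i\in W_n}\omega_i^2$; moreover the boundary-field sum $\sum_{i\in W_n}h_{\omega_i,i}$ in step $n$ replaces $\sum_{i\in W_{n-1}}h_{\sigma_i,i}$ in step $n-1$. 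Hence the ratio of partition functions aside, the compatibility identity factorizes over the vertices $i\in W_{n-1}$, and for each such $i$ reduces to: there exists a constant $c_i>0$ (independent of the value $\sigma_i$) such that
\begin{equation*}
\exp\bigl(h_{\sigma_i,i}\bigr)\,c_i=\sum_{(\omega_j)_{j\in S(i)}}\exp\Bigl(-\beta J\sum_{j\in S(i)}\mathbf 1(\sigma_i\omega_j=-1)+\ln\lambda\sum_{j\in S(i)}\omega_j^2+\sum_{j\in S(i)}h_{\omega_j,j}\Bigr),
\end{equation*}
which because the summand factorizes over $j\in S(i)$ becomes $\exp(h_{\sigma_i,i})c_i=\prod_{j\in S(i)}\bigl(\sum_{\omega_j\in\{-1,0,1\}}\exp(-\beta J\mathbf 1(\sigma_i\omega_j=-1)+\ln\lambda\,\omega_j^2+h_{\omega_j,j})\bigr)$.

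Next I would evaluate the inner sum over $\omega_j\in\{-1,0,1\}$ for each of the three choices $\sigma_i\in\{-1,0,1\}$. Writing $\theta=e^{-\beta J}$, the inner factor equals $e^{h_{0,j}}\bigl(1+\lambda e^{h_{+1,j}-h_{0,j}}\theta^{\mathbf 1(\sigma_i=-1)}+\lambda e^{h_{-1,j}-h_{0,j}}\theta^{\mathbf 1(\sigma_i=1)}\bigr)$ — so for $\sigma_i=0$ there is no $\theta$, for $\sigma_i=1$ the $\theta$ multiplies the minus-term, and for $\sigma_i=-1$ it multiplies the plus-term. Dividing the $\sigma_i=1$ equation by the $\sigma_i=0$ equation eliminates both $c_i$ and the factors $e^{h_{0,j}}$, and, after introducing $\tilde h_{\pm,j}=\ln\lambda+h_{\pm1,j}-h_{0,j}$, produces exactly
\begin{equation*}
h_{+1,i}-h_{0,i}=\sum_{j\in S(i)}\ln\frac{1+e^{\tilde h_{+,j}}+\theta e^{\tilde h_{-,j}}}{1+e^{\tilde h_{+,j}}+e^{\tilde h_{-,j}}}=\sum_{j\in S(i)}f(\tilde h_{+,j},\tilde h_{-,j},\theta);
\end{equation*}
adding $\ln\lambda$ to both sides converts the left-hand side into $\tilde h_{+,i}$, giving the first line of \eqref{eq: comp}. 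The $\sigma_i=-1$ versus $\sigma_i=0$ quotient gives the second line by the obvious $+\leftrightarrow-$ symmetry. For the converse direction, given fields satisfying \eqref{eq: comp} one reverses these steps: define $c_i$ by the $\sigma_i=0$ equation, check the $\sigma_i=\pm1$ equations hold, multiply over $i\in W_{n-1}$, and absorb the resulting global constant into the normalization $Z_{n-1}$, which forces the identity since both sides are probability measures.

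The main obstacle is bookkeeping rather than conceptual: one must carefully isolate which terms of $H_n^{sc}$ and of the boundary-field sum survive when passing from step $n$ to step $n-1$, and track the normalization constants $Z_n,Z_{n-1}$ and the site-dependent constants $c_i$ so that they can be shown to cancel or be reabsorbed — the point being that compatibility is insensitive to any multiplicative constant, which is precisely why only the field \emph{differences} $\tilde h_{\pm}$ (not $h_{-1},h_0,h_{1}$ separately) appear in the final equations. Care is also needed to confirm that $c_i$ genuinely does not depend on $\sigma_i$, i.e.\ that the system of three equations indexed by $\sigma_i$ is consistent, which is exactly the content of the two independent ratios reducing to \eqref{eq: comp}; this is what makes the condition both necessary and sufficient.
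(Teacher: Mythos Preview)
Your proposal is correct and follows essentially the same route as the paper: factorize the compatibility relation over $i\in W_{n-1}$, evaluate the one-step sum over $\omega_j\in\{-1,0,1\}$ for each of the three values of $\sigma_i$, take the $\sigma_i=\pm 1$ over $\sigma_i=0$ ratios to kill the site constants (your $c_i$ is the paper's $a(i)$), and for the converse absorb $\prod_i c_i$ into $Z_n/Z_{n-1}$ using that both sides are normalized. The only cosmetic difference is that the paper writes the three $\sigma_i$-equations at the level of the full product over $W_{n-1}$ before dividing, whereas you isolate a single $i$ first; the content is identical.
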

	\begin{proof}
		First we show that compatibility implies \eqref{eq: comp}. By this we get
		\begin{align*}
		\frac{Z_{n-1}}{Z_n} \sum_{\omega_n\in\Omega_{W_n}} \exp\left(\sum_{i\in W_{n-1}}\sum_{j\in S(i)} \left(-J\beta \mathbf{1}(\sigma_i\omega_j =-1) +\ln(\lambda) \omega_j^2 +  h_{\omega_j,j}\right)\right) = \exp(\sum_{i\in W_{n-1}} h_{\sigma_i,i}).
		\end{align*}
Fix arbitrary $i\in W_{n-1}$ and consider arbitrary configurations $\sigma_{W_{n-1}}$, with fixed $\sigma_{i}$ to be one of $1,-1$ and $0$.
Then we get the following three equations
		\begin{align*}
		\frac{Z_{n-1}}{Z_n} \sum_{\omega_n\in\Omega_{W_n}} \exp\left(\sum_{i\in W_{n-1}}
\sum_{j\in S(i)} \left(-J\beta \mathbf{1}(\omega_j =\mp 1) +\ln(\lambda) \omega_j^2 +
h_{\omega_j,j}\right)\right) &= \prod_{i\in W_{n-1}}\exp(h_{\pm 1,i})\\
		\frac{Z_{n-1}}{Z_n} \sum_{\omega_n\in\Omega_{W_n}}
\exp\left(\sum_{i\in W_{n-1}}\sum_{j\in S(i)} \left( \ln(\lambda) \omega_j^2 +  h_{\omega_j,j}\right)\right) &= \prod_{i\in W_{n-1}}\exp(h_{0,i}).
		\end{align*}
		Dividing the first two equations by the last one and using some combinatorial arguments yields
		\begin{align*}
	\prod_{j\in S(i)}	\left(\frac{\sum_{q\in \{-1,0,1\} } \exp( -J\beta \mathbf{1}(q=\mp 1) +\ln(\lambda) q^2 +  h_{q,j})}{\sum_{q\in \{-1,0,1\} } \exp(  \ln(\lambda) q^2 +  h_{q,j})}\right) = e^{h_{\pm1,i}-h_{0,i}}.
		\end{align*}
		Using the substitution defined above we obtain
		\begin{align*}
		e^{\tilde h_{\pm,i}-\ln(\lambda)} = \prod_{j\in S(i)} \left(\frac{1+e^{\tilde h_\pm}+e^{-J\beta+\tilde h_\mp}}{1+e^{\tilde h_\pm}+e^{\tilde h_\mp}}\right).
		\end{align*}
		By using the logarithm and adding $\ln(\lambda)$ on both sides \eqref{eq: comp} follows.\\
		For the second implication we split the Hamiltonian into a part which depends only on the configuration up to the step $n-1$ and one depending on step $n-1$ and $n$. By definition this yields
		\begin{align*}
		\sum_{\omega_n\in \Omega_{W_n}}\mu_n (\sigma_{n-1}\omega_n) =\frac{1}{Z_n} e^{-\beta H_{n-1}^{sc}(\sigma_{n-1})} \prod_{i\in W_{n-1} } \prod_{j\in S(i) } \left(\sum_{q\in \{-1,0,1\}} e^{-\beta\mathbf{1}(q\sigma_i=-1)+\ln(\lambda)q^2+h_{q,j} }\right).
		\end{align*}
		Using again the substitution for $\tilde h$ one get from \eqref{eq: comp} the equations
		\begin{align*}
		a(i) e^{h_{\pm 1,i}} =  \prod_{j\in S(i)}(\sum_{q\in \{-1,0,1\} } \exp( -J\beta \mathbf{1}(q=\mp 1) +\ln(\lambda) q^2 +  h_{q,j})).
		\end{align*}
		for some function $a$ which is bigger than zero. These equations together yields
		\begin{align*}
		\sum_{\omega_n\in \Omega_{W_n}}\mu_n (\sigma_{n-1}\omega_n)  = \frac{\prod_{i\in W_{n-1}} a(i)}{Z_n} e^{-\beta H_{n-1}^{sc}(\sigma_{n-1})} \prod_{i\in W_{n-1} }  e^{h_{\sigma_i,i}}
		\end{align*}
		and since $\mu_{n,\beta}^{sc}$ is a probability measure it follows that
		\begin{align*}
		1 = \sum_{\sigma_{n-1}\in V_{n-1}}\frac{\prod_{i\in W_{n-1}} a(i)}{Z_n} e^{-\beta H_{n-1}^{sc}(\sigma_{n-1})} \prod_{i\in W_{n-1} }  e^{h_{\sigma_i,i}}.
		\end{align*}
		This implies $\prod_{i\in W_{n-1}} a(i)= \frac{Z_n}{Z_{n-1}}$ and concludes the proof.
	\end{proof}
	\begin{cor}
			Let $J>0$ and $\lambda>0$.
			The sequence of probability measures $(\mu^{hc}_{n})_n$ is compatible if and only if for any $x\in \mathbb T^k$ the two equations
			\begin{align}
			\tilde h_{+,i} = \ln(\lambda)+\sum_{j\in S(i)} g(\tilde h_{+,j},\tilde h_{-,j})\\
\tilde h_{-,i} = \ln(\lambda)+\sum_{j\in S(i)} g(\tilde h_{-,j},\tilde h_{+,j})
			\end{align}
			holds, where $g(x,y)= \ln(\frac{1+e^{x}}{1+e^{x}+e^{y}})$ and $\tilde h_{\pm,j}= \ln(\lambda)+h_{\pm1,j}-h_{0,j}$.
	\end{cor}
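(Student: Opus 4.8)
The plan is to obtain the Corollary by rerunning the proof of Theorem~\ref{cc} with the Hard-Core weights, which are nothing but the $\theta=0$ specialization of the Soft-Core ones: replacing in $\mu^{sc}_{n,\beta}$ each factor $e^{-J\beta\mathbf 1(\omega_i\omega_j=-1)}$ by the indicator $\mathbf 1(\omega_i\omega_j\neq-1)$ --- that is, formally putting $\theta=e^{-J\beta}=0$ --- gives
\[
\mu^{hc}_n(\omega)=\frac{1}{Z_n}\Big(\prod_{\{i,j\}\in L_n}\mathbf 1(\omega_i\omega_j\neq-1)\Big)\lambda^{\sum_{i\in V_n}\omega_i^2}\,e^{\sum_{i\in W_n}h_{\omega_i,i}},
\]
which is consistent with $\lim_{\beta\to\infty}\mu^{sc}_{n,\beta}=\mu^{hc}_n$ from Section~2 (the state space is finite, so the limit is uniform, and each $Z_n$ stays strictly positive because the all-$0$ configuration is always admissible). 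The key point is that the proof of Theorem~\ref{cc} is purely algebraic in the pair weight and goes through unchanged at $\theta=0$, together with the identity $f(x,y,0)=\ln\frac{1+e^{x}}{1+e^{x}+e^{y}}=g(x,y)$.

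Concretely, for the forward implication I would argue exactly as in Theorem~\ref{cc}: compatibility, after cancelling the common hard-core weight on $V_{n-1}$ and the ratio $Z_{n-1}/Z_n$, gives
\[
\sum_{\omega^n\in\Omega_{W_n}}\ \prod_{i\in W_{n-1}}\prod_{j\in S(i)}\mathbf 1(\sigma_i\omega_j\neq-1)\,\lambda^{\omega_j^2}\,e^{h_{\omega_j,j}}=\frac{Z_n}{Z_{n-1}}\prod_{i\in W_{n-1}}e^{h_{\sigma_i,i}};
\]
since $W_n$ carries no internal edges the left side factorizes over $j\in S(i)$, and fixing $\sigma_i\in\{-1,0,1\}$ and dividing the equation for $\sigma_i=\pm1$ by the one for $\sigma_i=0$ cancels $Z_n/Z_{n-1}$ and yields, for each $i\in W_{n-1}$,
\[
\prod_{j\in S(i)}\frac{\sum_{q\in\{0,\pm1\}}\lambda^{q^2}e^{h_{q,j}}}{\sum_{q\in\{-1,0,1\}}\lambda^{q^2}e^{h_{q,j}}}=e^{h_{\pm1,i}-h_{0,i}}.
\]
Dividing numerator and denominator by $e^{h_{0,j}}$ and inserting $\tilde h_{\pm,j}=\ln(\lambda)+h_{\pm1,j}-h_{0,j}$ turns this into $\prod_{j\in S(i)}\frac{1+e^{\tilde h_{\pm,j}}}{1+e^{\tilde h_{+,j}}+e^{\tilde h_{-,j}}}=e^{\tilde h_{\pm,i}-\ln(\lambda)}$, whose logarithm is precisely the asserted recursion. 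For the converse I would run the second half of the proof of Theorem~\ref{cc} verbatim: starting from the $g$-recursions one produces a positive function $a(\cdot)$ with $a(i)e^{h_{\pm1,i}}=\prod_{j\in S(i)}\sum_{q}\mathbf 1(q\neq\mp1)\lambda^{q^2}e^{h_{q,j}}$, substitutes these into $\sum_{\omega^n\in\Omega_{W_n}}\mu^{hc}_n(\sigma^{n-1}\omega^n)$, and uses that $\mu^{hc}_n$ is a probability measure to force $\prod_{i\in W_{n-1}}a(i)=Z_n/Z_{n-1}$, which is compatibility.

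The only point that deserves a word --- the ``main obstacle'', mild as it is --- is checking that no step of the proof of Theorem~\ref{cc} tacitly used $\theta>0$: all logarithms remain well defined since the relevant quantity $1+e^{\tilde h_{\pm}}+\theta e^{\tilde h_{\mp}}$ is still $\geq 1+e^{\tilde h_{\pm}}>0$ at $\theta=0$, and every $Z_n>0$. Equivalently, one may simply invoke Theorem~\ref{cc} and set $\theta=0$ in its statement, the re-derivation above being no more than a transcription of that proof with $e^{-J\beta\mathbf 1(\cdot)}$ replaced by $\mathbf 1(\cdot\neq-1)$.
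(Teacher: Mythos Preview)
Your proposal is correct and takes essentially the same approach as the paper: the paper's proof is the single sentence ``Follows from Theorem~\ref{cc} with $\theta=0$,'' and your argument is precisely this specialization, together with an explicit check that nothing in the proof of Theorem~\ref{cc} breaks at $\theta=0$ (well-definedness of the logarithms, positivity of $Z_n$) and the observation $f(x,y,0)=g(x,y)$. One small notational quibble: in your displayed ratio the numerator sum over $q\in\{0,\pm1\}$ is ambiguous---you mean the two-element set $\{0,\pm1\}$ with the sign matching that of $\sigma_i$, not all three values; writing $\sum_{q\neq\mp1}$ (as you do later) would be clearer.
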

	\begin{proof}
		Follows from Theorem \ref{cc} with $\theta=0$.
	\end{proof}

\section{Translational invariant Gibbs measures}

In particular, we are interested in the translation-invariant spitting Gibbs measures (TISGMs).
In this case the external field vectors $h^i$ do not depend on $i$, i.e. $h^i = h$ for all $i\in \mathbb{T}^k$ and some $h\in \R^3$.
So the equations \eqref{eq: comp} by introducing two new variables $x= e^{\tilde h_+}$ and $y= e^{\tilde h_-}$ can be written as
 \begin{equation}\label{exy}
 \begin{array}{ll}
 x = \lambda\left(\frac{1+x+\theta y}{1+x+y}\right)^k\\[2mm]
 y = \lambda\left(\frac{1+\theta x+y}{1+x+y}\right)^k.
  \end{array}
  \end{equation}
  In the case of HCWR-model (i.e. $\theta=0$) the following theorem is known

  \begin{thm} \cite{RKh} Let $k\geq 2$ and
$\lambda_{cr}(k)={1\over k-1}\cdot\left({k+1\over k}\right)^k.$ Then
\begin{itemize}
\item[1.] For $\lambda>\lambda_{cr}(k)$ there exist at least three
TISGMs,

\item[2.] For $\lambda\leq\lambda_{cr}(k)$ there exists a unique
TISGM.
\end{itemize}
\end{thm}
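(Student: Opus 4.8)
The plan is to reduce the problem, via the Hard-Core corollary of Theorem~\ref{cc} (the case $\theta=0$) and the bijection between boundary laws and splitting Gibbs measures, to counting the positive solutions $(x,y)$ of the system \eqref{exy} with $\theta=0$,
\[
x=\lambda\Bigl(\frac{1+x}{1+x+y}\Bigr)^{k},\qquad y=\lambda\Bigl(\frac{1+y}{1+x+y}\Bigr)^{k},
\]
distinct solutions corresponding to distinct TISGMs. The system is invariant under $(x,y)\mapsto(y,x)$, so I would treat the symmetric solutions $x=y$ and the asymmetric ones (which come in mirror pairs) separately. For $x=y$ the single equation $x=\lambda\bigl((1+x)/(1+2x)\bigr)^{k}$ has a strictly decreasing positive right-hand side and a left-hand side that increases from $0$, so there is exactly one symmetric solution $x_{0}(\lambda)$ for every $\lambda>0$, and it is increasing in $\lambda$.

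Next I would reduce the asymmetric case to one variable. Dividing the two equations gives $x/(1+x)^{k}=\lambda/(1+x+y)^{k}=y/(1+y)^{k}$, so any solution satisfies $\phi(x)=\phi(y)$ with $\phi(t):=t/(1+t)^{k}$. Since $\phi'(t)=(1-(k-1)t)/(1+t)^{k+1}$, the function $\phi$ is strictly increasing on $(0,t^{\ast})$ and strictly decreasing on $(t^{\ast},\infty)$, where $t^{\ast}:=1/(k-1)$ and $\phi(t^{\ast})=(k-1)^{k-1}/k^{k}$; hence an asymmetric solution has $x$ and $y$ strictly on opposite sides of $t^{\ast}$. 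Writing $x\in(0,t^{\ast})$ for the smaller coordinate and $y=y(x)\in(t^{\ast},\infty)$ for the unique companion with $\phi(y(x))=\phi(x)$ — a smooth, strictly decreasing function with $y(x)\to t^{\ast}$ as $x\to t^{\ast}$ — the remaining equation pins down the activity: $\lambda=\Lambda(x):=\phi(x)\,(1+x+y(x))^{k}$. Conversely each $x\in(0,t^{\ast})$ with $\Lambda(x)=\lambda$ produces an asymmetric solution $(x,y(x))$ and its mirror. So the number of TISGMs is $1$ if $\lambda\notin\Lambda\bigl((0,t^{\ast})\bigr)$ and at least $3$ otherwise.

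It remains to locate the range of $\Lambda$. It is continuous on $(0,t^{\ast})$; as $x\to t^{\ast-}$ one has $y(x)\to t^{\ast+}$, so
\[
\Lambda(x)\;\longrightarrow\;\phi(t^{\ast})(1+2t^{\ast})^{k}=\frac{(k-1)^{k-1}}{k^{k}}\Bigl(\frac{k+1}{k-1}\Bigr)^{k}=\frac{1}{k-1}\Bigl(\frac{k+1}{k}\Bigr)^{k}=\lambda_{cr}(k);
\]
and as $x\to0^{+}$, $\phi(x)\to0$ forces $y(x)\to\infty$ with $y(x)\sim x^{-1/(k-1)}$, so $\Lambda(x)\sim x\,y(x)^{k}\to\infty$. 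By the intermediate value theorem this already yields part~1: for $\lambda>\lambda_{cr}(k)$ there is an $x$ with $\Lambda(x)=\lambda$, hence an asymmetric pair, so together with the symmetric solution there are at least three TISGMs. For part~2 I would prove the sharper statement $\Lambda(x)>\lambda_{cr}(k)$ for all $x\in(0,t^{\ast})$ — most naturally by showing $\Lambda$ is strictly decreasing — so that $\Lambda\bigl((0,t^{\ast})\bigr)=(\lambda_{cr}(k),\infty)$ and hence, for $\lambda\le\lambda_{cr}(k)$, no asymmetric solution exists and the symmetric one is the unique TISGM.

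The strict monotonicity of $\Lambda$ is the one genuine obstacle. Logarithmic differentiation gives
\[
\frac{\Lambda'(x)}{\Lambda(x)}=\frac{1-(k-1)x}{x(1+x)}+\frac{k\bigl(1+y'(x)\bigr)}{1+x+y(x)},\qquad y'(x)=\frac{\phi'(x)}{\phi'(y(x))}<0,
\]
and using $\phi(x)=\phi(y)$ in the form $(1+y)^{k}/(1+x)^{k}=y/x$ one eliminates $y'$ explicitly. After clearing the (sign-definite) denominators, $\Lambda'(x)<0$ becomes an elementary rational inequality in $x<t^{\ast}<y=y(x)$ constrained by $\phi(x)=\phi(y)$; in the variables $a=1+x\in(1,\tfrac{k}{k-1})$, $b=1+y\in(\tfrac{k}{k-1},\infty)$ it takes the clean form
\[
ab\;<\;\frac{k\,(a+b-1)(a+b+k)}{(k-1)(a+b)+k^{2}+1}\qquad\text{whenever}\quad \frac{a-1}{a^{k}}=\frac{b-1}{b^{k}} .
\]
Proving this — presumably by using the constraint to control $ab$ in terms of $a+b$ along the curve, and by checking the tangential degeneracy at $a,b\to\tfrac{k}{k-1}$, where both sides tend to $(k/(k-1))^{2}$ — is where the computational work lies; the symmetry reduction, the unimodality of $\phi$, the boundary limits, and the final count are routine. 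One may corroborate the value of $\lambda_{cr}(k)$ by linearizing the map $(x,y)\mapsto\bigl(\lambda(\tfrac{1+x}{1+x+y})^{k},\lambda(\tfrac{1+y}{1+x+y})^{k}\bigr)$ at the symmetric fixed point: the eigenvalue along the antisymmetric direction $(1,-1)$ equals $kx_{0}(\lambda)/(1+x_{0}(\lambda))$, which increases in $\lambda$ and equals $1$ exactly when $x_{0}=1/(k-1)$, i.e. at $\lambda=\lambda_{cr}(k)$ — precisely the threshold at which the asymmetric branch splits off the symmetric one.
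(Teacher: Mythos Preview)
The paper does not prove this theorem; it is quoted from \cite{RKh}. Your argument for part~1 is correct: the relation $x/(1+x)^{k}=y/(1+y)^{k}$, the unimodality of $\phi(t)=t/(1+t)^{k}$, the parametrization $\lambda=\Lambda(x)$, and the two boundary limits $\Lambda(t^{\ast-})=\lambda_{cr}(k)$ and $\Lambda(0^{+})=\infty$ are all correctly derived, and the intermediate value theorem then gives an asymmetric pair for every $\lambda>\lambda_{cr}(k)$.

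Part~2 has the gap you yourself flag: you need $\Lambda(x)>\lambda_{cr}(k)$ for all $x\in(0,t^{\ast})$, and the proposed route through strict monotonicity of $\Lambda$ is not carried out --- your displayed inequality in $a,b$ is only stated, not proved. The linearization remark at the end confirms that $\lambda_{cr}(k)$ is the transverse bifurcation value of the symmetric branch, but this does not by itself exclude asymmetric solutions for smaller $\lambda$.

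It is worth noting that the paper's own machinery for the Soft-Core case (the subsubcase $k\ge4$ in Section~4) runs into the same obstacle. There the system is recast as $u=\gamma(v)$, $v=\gamma(u)$ for a decreasing $\gamma$, and Kesten's Lemma~\ref{l1} produces a $2$-cycle of $\gamma$ whenever $\gamma'(\xi)<-1$ at the unique fixed point; specialized to $\theta=0$ this recovers your threshold for non-uniqueness, but uniqueness is obtained only under the weaker bound $\lambda<\lambda'_{cr}(k)=1/(k-1)$ via a contraction estimate, and closing the gap to $\lambda_{cr}(k)$ is left as Conjecture~\ref{con}. So the missing monotonicity of your $\Lambda$ is, in a different guise, exactly the step the paper does not supply either; the sharp uniqueness claim rests on the cited reference \cite{RKh}.
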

\begin{rk} For the Hard-Core model on the Cayley tree of order two (i.e. $k=2$)
    it is proven that for $\lambda\leq \frac{9}{4}$ there exists only
    one TISGM and for $\lambda>\frac{9}{4}$ there are exactly $3$ (see \cite{RSh}, \cite{Ro}).
    Such a result is also true for $k=3$: if $\lambda>\lambda_{cr}={32\over 27}$ (see \cite{XR1}) then there exist {\rm exactly}
three TISGMs.
\end{rk}
  The following lemma is obvious:
  \begin{lemma}\label{le} If $(x^*,y^*)$ is a solution to (\ref{exy}) then $(y^*,x^*)$ is also its solution.
  \end{lemma}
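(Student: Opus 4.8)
The plan is to simply exploit the manifest symmetry of the system \eqref{exy} under the simultaneous swap $x \leftrightarrow y$. First I would write down the two equations explicitly and observe that the second equation is obtained from the first by interchanging the roles of $x$ and $y$; concretely, if we denote $F(x,y) = \lambda\left(\frac{1+x+\theta y}{1+x+y}\right)^k$ and $G(x,y) = \lambda\left(\frac{1+\theta x+y}{1+x+y}\right)^k$, then one checks directly that $G(x,y) = F(y,x)$, since swapping $x$ and $y$ turns the numerator $1+x+\theta y$ into $1+y+\theta x = 1+\theta x + y$ and leaves the denominator $1+x+y$ invariant.

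Now suppose $(x^*,y^*)$ solves \eqref{exy}, i.e.\ $x^* = F(x^*,y^*)$ and $y^* = G(x^*,y^*) = F(y^*,x^*)$. I would then verify that $(y^*,x^*)$ also solves the system: the first equation for the pair $(y^*,x^*)$ reads $y^* = F(y^*,x^*)$, which is exactly the second equation for $(x^*,y^*)$ that we assumed; and the second equation for $(y^*,x^*)$ reads $x^* = F(x^*,y^*)$, which is the first equation for $(x^*,y^*)$. Hence both equations hold for $(y^*,x^*)$, completing the argument.

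There is essentially no obstacle here; the statement is genuinely a one-line consequence of the built-in $\mathbb{Z}_2$ symmetry of the tree recursion, reflecting the $\pm 1 \to \mp 1$ spin-flip symmetry of the Widom--Rowlinson Hamiltonian. The only thing worth stating carefully is the identity $G(x,y) = F(y,x)$, after which the conclusion is immediate.
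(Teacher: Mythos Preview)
Your argument is correct and is exactly the symmetry observation the paper has in mind; the paper itself simply declares the lemma ``obvious'' and gives no further proof, so you have merely spelled out the one-line $x\leftrightarrow y$ swap that the authors left implicit.
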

  In particular, from this lemma it follows that if there exists a solution, $(x^*,y^*)$, with $x^*\ne y^*$,
  then the equation has more than one solutions.

  Subtracting from the first equation the second one we get
  $$(x-y)\left[1-\lambda(1-\theta){(1+x+\theta y)^{k-1}+\dots+(1+\theta x+y)^{k-1}\over (1+x+y)^k}\right]=0.$$
   From this we get $x=y$ or
  \begin{equation}\label{2h}
  (1+x+y)^k=\lambda(1-\theta)((1+x+\theta y)^{k-1}+\dots+(1+\theta x+y)^{k-1}).
  \end{equation}

  {\it CASE: $J<0$}. In this case we have $\theta=\exp(-J\beta)>1$. Therefore (\ref{2h}) is not satisfied, since the LHS is positive and
  RHS is negative. Thus we have only $x=y$. Therefore, in this case Lemma \ref{le} can not be applied to show non-uniqueness.

  Then from the first equation of (\ref{exy}) we get
  \begin{equation}\label{ef}
  \lambda^{-1}x=\left(\frac{1+(1+\theta)x}{1+2x}\right)^k.
  \end{equation}
  Denoting
  $$a= {2^k\over \lambda(1+\theta)^{k+1}}, \ \ b={1+\theta\over 2}, \ \ t=(1+\theta)x.$$

  Then equation (\ref{ef}) can be rewritten as
\begin{equation}\label{bu1}
at=\left({1+t\over b+t}\right)^k.
\end{equation}
The detailed analysis of solutions to equation (\ref{bu1}) is given in \cite{Pr}, Proposition 10.7,
which is the following:

\begin{pro}\label{Prp} Equation (\ref{bu1}) with $t\geq 0$,
$k\geq 1$, $a,b >0$ has
	a unique solution if either $k=1$ or
	$b\leq ({k+1 \over k-1})^2$.
 If $k>1$ and $b>({k+1 \over k-1})^2$
then there exist $\nu_1(b,k)$, $\nu_2(b,k)$, with
$0<\nu_1(b,k)< \nu_2(b,k)$, such that \begin{itemize}
	\item[1.] the equation has three
	solutions if $\nu_1(b,k)<a< \nu_2(b,k)$,
	\item[2.] the equation has two solutions if either
	$a=\nu_1(b,k)$ or $a= \nu_2(b,k)$,
	\item[3.] the equation has one solution if $a\notin  [\nu_1(b,k),\nu_2(b,k)]$.
\end{itemize}  In fact:
$$ \nu_i(b,k)={1\over x_i}\left({1+x_i \over b+x_i}\right)^k, $$
where $x_1,x_2$ are the solutions of
$$ x^2+[2-(b-1)(k-1)]x+b=0.$$
\end{pro}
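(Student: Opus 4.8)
The plan is to reduce \eqref{bu1} to a question about the level sets of a single one-variable function. At $t=0$ the left-hand side of \eqref{bu1} vanishes while the right-hand side equals $(1/b)^{k}>0$, so every solution has $t>0$; dividing by $t$, we see that $t>0$ solves \eqref{bu1} if and only if $a=\psi(t)$, where
\[
\psi(t):=\frac1t\left(\frac{1+t}{b+t}\right)^{k},\qquad t>0 .
\]
This $\psi$ is smooth and positive on $(0,\infty)$, with $\psi(t)\to+\infty$ as $t\to0^{+}$ and $\psi(t)\to0$ as $t\to\infty$, so the number of solutions of \eqref{bu1} equals the number of $t>0$ with $\psi(t)=a$. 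The proposition is therefore entirely about the monotonicity of $\psi$.

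Next I would differentiate logarithmically:
\[
\frac{\psi'(t)}{\psi(t)}=-\frac1t+\frac{k}{1+t}-\frac{k}{b+t}.
\]
Multiplying through by $t(1+t)(b+t)>0$, the numerator becomes $-(1+t)(b+t)+kt(b-1)$, which expands to $-\big(t^{2}+[\,2-(b-1)(k-1)\,]\,t+b\big)$. Hence, with $Q(t):=t^{2}+[\,2-(b-1)(k-1)\,]\,t+b$, the sign of $\psi'$ on $(0,\infty)$ is opposite to that of $Q$, and the critical points of $\psi$ are exactly the positive roots of $Q$. Since $Q$ has positive leading coefficient and $Q(0)=b>0$, its real roots share a sign, and they are positive precisely when the linear coefficient $c:=2-(b-1)(k-1)$ is negative. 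A short manipulation, using $b-1=(\sqrt b-1)(\sqrt b+1)$, shows that $c<0$ together with $c^{2}\ge 4b$ is equivalent to $(\sqrt b-1)(k-1)\ge 2$, i.e.\ to $b\ge\big(\tfrac{k+1}{k-1}\big)^{2}$ (which already forces $c<0$); and when $k=1$ one has $c=2>0$, so $Q$ has no positive root.

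These observations give the two regimes. If $k=1$, or $k>1$ and $b\le\big(\tfrac{k+1}{k-1}\big)^{2}$, then $Q\ge0$ on $(0,\infty)$ with at most one zero there, so $\psi'\le0$ with at most one zero, $\psi$ is strictly decreasing, and $\psi(t)=a$ has exactly one solution for each $a>0$. If $k>1$ and $b>\big(\tfrac{k+1}{k-1}\big)^{2}$, then $Q$ has two distinct positive roots $x_{1}<x_{2}$, with $Q>0$ on $(0,x_{1})\cup(x_{2},\infty)$ and $Q<0$ on $(x_{1},x_{2})$; thus $\psi$ decreases from $+\infty$ to the local minimum $\nu_{1}:=\psi(x_{1})$, increases to the local maximum $\nu_{2}:=\psi(x_{2})>\nu_{1}$, and then decreases to $0$. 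Reading off the level sets of this decreasing--increasing--decreasing profile via the intermediate value theorem gives three solutions when $\nu_{1}<a<\nu_{2}$, two when $a\in\{\nu_{1},\nu_{2}\}$, and one when $a\notin[\nu_{1},\nu_{2}]$; and $\nu_{i}=\psi(x_{i})=\tfrac1{x_{i}}\big(\tfrac{1+x_{i}}{b+x_{i}}\big)^{k}$ by definition of $\psi$, matching the stated formula. The one spot needing care is the threshold $b=\big(\tfrac{k+1}{k-1}\big)^{2}$: there $Q$ has a double root $x_{0}$, so $Q(t)=(t-x_{0})^{2}\ge0$ and $\psi'$ keeps its sign, which is exactly why the boundary case still gives uniqueness rather than a two-solution degeneracy; the remaining steps are routine single-variable calculus.
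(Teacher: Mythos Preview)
Your proof is correct and complete: reducing to the level sets of $\psi(t)=t^{-1}\bigl((1+t)/(b+t)\bigr)^{k}$, computing $\psi'/\psi$, and identifying the critical points as the positive roots of $Q(t)=t^{2}+[2-(b-1)(k-1)]t+b$ is exactly the right approach, and your discriminant analysis via $(\sqrt b-1)(k-1)\ge 2$ correctly yields the threshold $b=\bigl(\tfrac{k+1}{k-1}\bigr)^{2}$.

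As for comparison with the paper: the paper does not prove this proposition at all. It is quoted verbatim as Proposition~10.7 of Preston~\cite{Pr}, and used as a black box. Your argument is therefore strictly more than what the paper offers here; it is in fact the standard proof one finds in Preston, so there is no substantive methodological difference to discuss beyond the fact that you supplied the details the paper chose to outsource.
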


By Proposition \ref{Prp} the equation (\ref{ef}) has a unique solution if $k=1$ or $k\ne 1$ and
\begin{equation}\label{kk}
{1+\theta\over 2}\leq \left(k+1\over k-1\right)^2.
\end{equation}
Thus the critical value of $\theta$ for non-uniqueness is found from the equation ${1+\theta\over 2}=\left(k+1\over k-1\right)^2$:
$$ \theta_{\rm cr}=2\left(k+1\over k-1\right)^2-1>1.$$
Using Proposition \ref{Prp}, for given $k\geq 2$ and $\theta>\theta_{\rm cr}$
 we define two critical values for $\lambda$:
$$\lambda_{{\rm cr},i}:=\lambda_{{\rm cr},i}(k,\theta)={2^kx_i\over (1+\theta)^{k+1}}\left({1+\theta+2x_i\over 2(1+x_i)}\right)^k, \, i=1,2.$$
Here $x_1$ and $x_2$ are solutions of the following quadratic equation:
$$ 2x^2+[4-(\theta-1)(k-1)]x+\theta+1=0.$$

Summarizing we obtain
\begin{thm}\label{t>} For the SCWR-model in the antiferromagnetic case $J<0$ the following assertions hold:
\begin{itemize}
\item[1)] If $\theta\leq\theta_{\rm cr}$ then there exists a unique translation invariant splitting Gibbs measure (TISGM).
\item[2)] If $\theta>\theta_{\rm cr}$ then
\begin{itemize}
\item[2.a)] if $\lambda\in (0,\lambda_{{\rm cr},2})\cup (\lambda_{{\rm cr},1}, +\infty)$ then there is unique TISGM.
\item[2.b)] if $\lambda\in \{\lambda_{{\rm cr},2}, \lambda_{{\rm cr},1}\}$ then there are two TISGMs.
\item[2.c)] if $\lambda\in (\lambda_{{\rm cr},2}, \lambda_{{\rm cr},1})$ then there are three TISGMs.
\end{itemize}
\end{itemize}
\begin{figure}[!htb]
\includegraphics[width=0.7\textwidth]{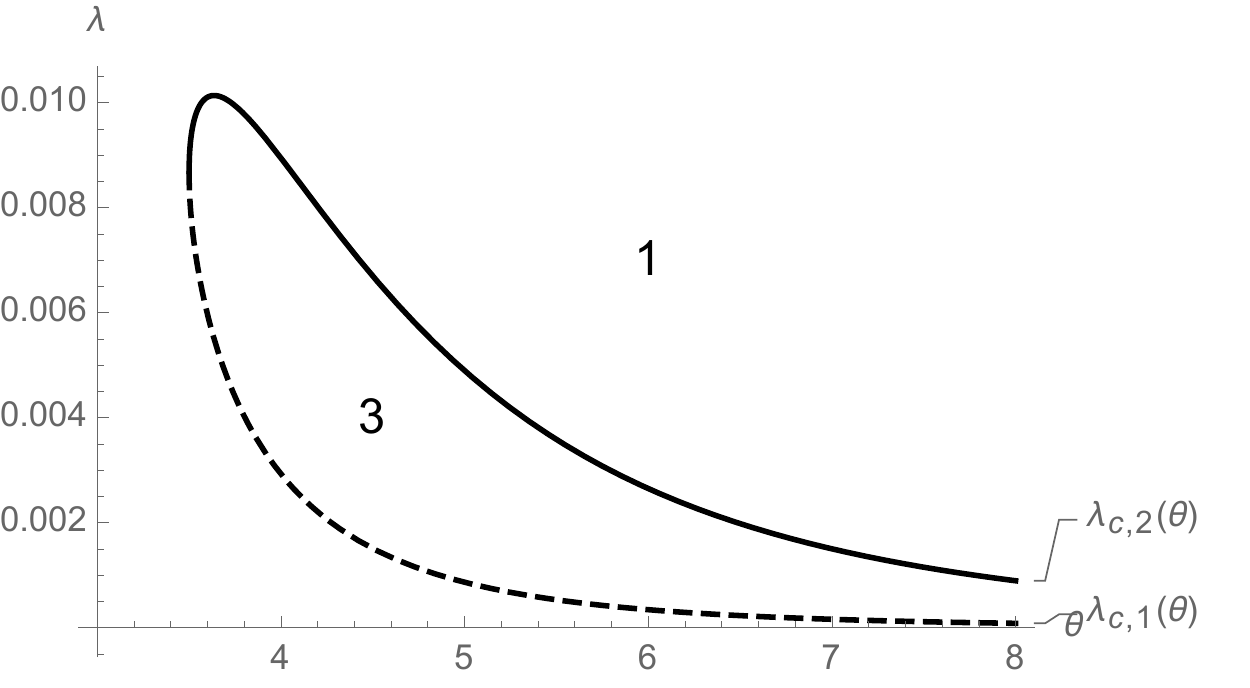}
\caption{Curves of critical $\lambda$'s for $k=5$. Numbers of TISGMs are shown on the related domains. On the curve there are exactly two TISGMs.}
\end{figure}
\end{thm}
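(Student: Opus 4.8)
The plan is to exploit the reductions already carried out before the statement: in the antiferromagnetic case $J<0$ we have $\theta>1$, so equation \eqref{2h} cannot hold and every solution of \eqref{exy} satisfies $x=y$. Hence the set of TISGMs is in bijection with the set of nonnegative solutions of the single scalar equation \eqref{ef}, and after the change of variables $t=(1+\theta)x$, $a=2^k/(\lambda(1+\theta)^{k+1})$, $b=(1+\theta)/2$ this becomes \eqref{bu1}, namely $at=((1+t)/(b+t))^k$. First I would invoke Proposition \ref{Prp} verbatim: it classifies the number of solutions of \eqref{bu1} for $t\ge 0$, $k\ge1$, $a,b>0$. The whole theorem is then a matter of translating its three cases back through the substitutions.

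For part 1), note $b=(1+\theta)/2$, so the Proposition's uniqueness condition $b\le((k+1)/(k-1))^2$ is exactly $(1+\theta)/2\le((k+1)/(k-1))^2$, i.e. $\theta\le\theta_{\rm cr}$ (and $k=1$ is excluded here since $k\ge2$); under this condition \eqref{bu1} has a unique solution, hence so does \eqref{ef}, giving a unique TISGM. For part 2) we are in the regime $b>((k+1)/(k-1))^2$, i.e. $\theta>\theta_{\rm cr}$, where the Proposition produces thresholds $\nu_1(b,k)<\nu_2(b,k)$. The key bookkeeping step is that $\lambda\mapsto a=2^k/(\lambda(1+\theta)^{k+1})$ is a strictly \emph{decreasing} bijection of $(0,\infty)$ onto $(0,\infty)$; therefore $\nu_1<a<\nu_2$ corresponds to an \emph{open} $\lambda$-interval, and $a=\nu_i$ corresponds to its endpoints. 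Solving $a=\nu_i(b,k)$ for $\lambda$ and substituting $b=(1+\theta)/2$, $x_i=t_i$ into $\nu_i(b,k)=(1/x_i)((1+x_i)/(b+x_i))^k$ gives
\[
\lambda_{{\rm cr},i}(k,\theta)=\frac{2^k x_i}{(1+\theta)^{k+1}}\left(\frac{1+\theta+2x_i}{2(1+x_i)}\right)^k,
\]
where the $x_i$ are, by the last display of Proposition \ref{Prp}, the roots of $x^2+[2-(b-1)(k-1)]x+b=0$; multiplying by $2$ and inserting $b=(1+\theta)/2$ turns this into $2x^2+[4-(\theta-1)(k-1)]x+\theta+1=0$, exactly the quadratic in the statement. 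Since $a\mapsto\lambda$ reverses order, $\nu_1<a<\nu_2$ becomes $\lambda_{{\rm cr},2}<\lambda<\lambda_{{\rm cr},1}$ (three solutions, case 2.c), $a\in\{\nu_1,\nu_2\}$ becomes $\lambda\in\{\lambda_{{\rm cr},2},\lambda_{{\rm cr},1}\}$ (two solutions, case 2.b), and $a\notin[\nu_1,\nu_2]$ becomes $\lambda\in(0,\lambda_{{\rm cr},2})\cup(\lambda_{{\rm cr},1},\infty)$ (one solution, case 2.a). Each scalar solution yields one TISGM via $x=y=t/(1+\theta)$ and the correspondence of Theorem \ref{cc}, so the count of solutions equals the count of TISGMs.

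The only genuine points requiring care — rather than routine algebra — are: checking that the order-reversal of $\lambda\leftrightarrow a$ is what swaps the indices $1$ and $2$ (so that $\lambda_{{\rm cr},2}<\lambda_{{\rm cr},1}$, consistent with $\nu_1<\nu_2$), and confirming that $\theta>\theta_{\rm cr}$ indeed forces $\theta_{\rm cr}>1$ so that we remain in the antiferromagnetic region (this is immediate since $2((k+1)/(k-1))^2-1>1$ for all $k\ge2$). Everything else is substitution. I expect no serious obstacle; the main risk is a sign or indexing slip in matching the two quadratics and the two parametrizations of the critical curves.
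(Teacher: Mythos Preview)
Your proposal is correct and follows exactly the paper's approach: the paper reduces to $x=y$ via \eqref{2h}, substitutes to obtain \eqref{bu1}, invokes Proposition~\ref{Prp}, and then states Theorem~\ref{t>} as a direct summary of that classification after translating $b\le((k+1)/(k-1))^2$ into $\theta\le\theta_{\rm cr}$ and $a=\nu_i$ into $\lambda=\lambda_{{\rm cr},i}$. Your write-up is in fact slightly more explicit than the paper's, in particular on the order-reversal $\lambda\leftrightarrow a$ that accounts for $\lambda_{{\rm cr},2}<\lambda_{{\rm cr},1}$.
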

  {\it CASE: $J>0$.} In this case the condition (\ref{kk}) is always satisfied, i.e. we have the following
  \begin{pro}\label{p<} For the ferromagnetic SCWR-model, for any $k\geq 2$, $\lambda>0$ the
  system (\ref{exy}) has a unique solution on the line $y=x$. Denote this solution by $(x^*,x^*)$, where $x^*$ is a function of all parameters.
\end{pro}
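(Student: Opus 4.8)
The plan is to restrict the system (\ref{exy}) to the diagonal $y=x$, where both equations collapse to the single scalar equation $x=\lambda\bigl(\tfrac{1+(1+\theta)x}{1+2x}\bigr)^k$, i.e.\ exactly equation (\ref{ef}); the proposition is therefore the assertion that (\ref{ef}) has a unique solution with $x>0$. First I would invoke the normalisation already recorded after (\ref{ef}): the substitution $t=(1+\theta)x$, $a=2^k/(\lambda(1+\theta)^{k+1})$, $b=(1+\theta)/2$ transforms (\ref{ef}) into (\ref{bu1}), $at=((1+t)/(b+t))^k$, with $a,b>0$; moreover positivity of $x$ corresponds to positivity of $t$, and $t=0$ is never a solution of (\ref{bu1}) (its left side vanishes there while its right side equals $(1/b)^k>0$), so a unique nonnegative root of (\ref{bu1}) is the same thing as a unique positive one.

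The main step is then simply to check that we sit in the uniqueness regime of Proposition \ref{Prp}. In the ferromagnetic case $J>0$ we have $\theta=e^{-J\beta}\in(0,1)$, hence $b=(1+\theta)/2\in(1/2,1)$. Since $((k+1)/(k-1))^2>1$ for every $k\ge 2$, the inequality $b\le((k+1)/(k-1))^2$ holds automatically --- this is precisely (\ref{kk}) --- so by Proposition \ref{Prp} equation (\ref{bu1}) has a unique nonnegative (hence unique positive) solution $t$. Undoing the substitution via $x=t/(1+\theta)$ gives a unique diagonal solution $(x^*,x^*)$ of (\ref{exy}); the phrase ``$x^*$ is a function of all parameters'' just records that this root is determined by $(k,\theta,\lambda)$. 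If one prefers to avoid citing \cite{Pr}, the same conclusion follows from a one-line monotonicity argument: with $g(x)=(1+(1+\theta)x)/(1+2x)$ one computes $g'(x)=(\theta-1)/(1+2x)^2<0$ since $\theta<1$, so the right side $\lambda g(x)^k$ of (\ref{ef}) is strictly decreasing and positive while the left side $x$ is strictly increasing from $0$; comparing values at $x=0$ and as $x\to\infty$ produces exactly one crossing.

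I do not expect a real obstacle here: the substantive content lives entirely in Proposition \ref{Prp} (or in the elementary computation of $g'$), and all that remains is the trivial bound $b<1\le((k+1)/(k-1))^2$. The only care required is bookkeeping --- confirming that $\theta\in(0,1)$ in the ferromagnetic regime, that the reduction of (\ref{exy}) to the diagonal is legitimate, and that positivity of $x$ is preserved under the change of variables leading to (\ref{bu1}).
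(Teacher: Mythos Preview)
Your proposal is correct and follows essentially the same route as the paper: the paper's entire argument for Proposition~\ref{p<} is the one-line observation that for $J>0$ one has $\theta<1$, so condition~(\ref{kk}) is automatically satisfied and Proposition~\ref{Prp} yields uniqueness of the diagonal solution of~(\ref{ef}). Your additional self-contained monotonicity argument via $g'(x)=(\theta-1)/(1+2x)^2<0$ is a valid alternative that bypasses the citation of Proposition~\ref{Prp}, but it is not in the paper.
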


 {\it SubCASE: $J>0$, $k=2$.}  We follow the approach of \cite{RSh}
  to find the TISGMs for the Soft-Core case.
	Assume $k=2$. Then from (\ref{2h}) (i.e $x\neq y$) we get
	\begin{align*}
		(1+x+y)^2= \lambda (1-\theta)[1-\theta+(1+\theta)(1+x+y)].
	\end{align*}
	Solving this equation as quadratic polynomial in $x$ we get
	\begin{equation}\label{ee}
1+x+y=\frac{ \lambda(1-\theta^2)+(1-\theta) \sqrt{\lambda ^2 (1+\theta)^2+4 \lambda}}{2}:=g(\theta,\lambda).
	\end{equation}
	Putting this formula into first equation of (\ref{exy}) we obtain
\begin{equation}\label{sqe}
		x = \lambda \Bigl(\frac{1+x+\theta(g(\theta,\lambda)-1-x)}{g(\theta,\lambda)}\Bigr)^2.
	\end{equation}
Define
$$\theta_c(2):={1\over 3}, \ \ \ \lambda_{\rm cr}(2):= {1\over 1-3\theta}\cdot \left({3\over 2}\right)^2.$$
Simple but long calculations show that the equation (\ref{sqe}):
\begin{itemize}
\item has no positive solution if $\theta\geq \theta_c(2)$.
\item if $\theta<\theta_c(2)$ then
\begin{itemize}
\item[a.]  for $\lambda<\lambda_{\rm cr}(2)$ the equation has no positive solution.
\item[b.]  for $\lambda=\lambda_{\rm cr}(2)$ the equation has a unique positive solution. Denote it by $x^*$.
\item[c.]  for $\lambda>\lambda_{\rm cr}(2)$ there exist two positive solutions. Denote them by $x_1^*$ and $x^*_2$, with $x_1^*<x^*_2$.
\end{itemize}
\end{itemize}
By (\ref{ee}) we can find $y^*$, $y_1^*$ and $y^*_2$
corresponding to $x^*$, $x_1^*$ and $x^*_2$ respectively.
In fact, using Vieta's formulas applied to quadratic polynomial, or just using symmetry of $x,y$ one can
see that
$$x^*=y^*, \ \ x^*_1=y^*_2, \ \ x^*_2=y^*_1.$$

Consequently, we have up to three solutions of (\ref{exy}):
\begin{equation}\label{ye}
\mathcal M=\{(x^*,x^*), \ \ (x_1^*,x_2^*), \ \ (x^*_2, x^*_1)\}.
\end{equation}

Summarize now results of this subsection in the following
\begin{thm}\label{t<} For the ferromagnetic SCWR-model on the binary tree $k=2$ the following assertions hold:
\begin{itemize}
\item[1)] If $\theta\geq\theta_{c}(2)$ then there exists a unique TISGM.
\item[2)] If $\theta<\theta_{c}(2)$ then
\begin{itemize}
\item[2.a)] if $\lambda\leq\lambda_{{\rm cr}}(2)$ then there is a unique TISGM, denoted by $\mu^*$.
\item[2.b)] if $\lambda>\lambda_{{\rm cr}}(2)$ then there are three TISGMs, denoted by $\mu^*$, $\mu^*_i$, $i=1,2$.
\end{itemize}
\end{itemize}
\end{thm}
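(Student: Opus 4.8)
The plan is to count the positive solutions $(x,y)$ of the fixed–point system \eqref{exy} — which are in bijection with the TISGMs of the model — and to organize the count by whether a solution lies on the diagonal $x=y$ or off it. By Proposition~\ref{p<} the diagonal carries exactly one solution $(x^*,x^*)$ for every $\theta\in(0,1)$ and every $\lambda>0$; this is the measure $\mu^*$. So the whole statement reduces to counting the off‑diagonal positive solutions and adding one.

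For the off‑diagonal solutions I would use the reduction already performed before the theorem: subtracting the two lines of \eqref{exy} forces \eqref{2h}, which for $k=2$ is a quadratic in $s:=1+x+y$ whose only positive root is $s=g(\theta,\lambda)$ from \eqref{ee}; substituting $y=g-1-x$ into the first line of \eqref{exy} yields \eqref{sqe}. Since $g>0$, clearing the denominator turns \eqref{sqe} into an equivalent quadratic $A x^2+Bx+C=0$ with
\[
A=\lambda(1-\theta)^2,\qquad C=\lambda\bigl(1+\theta(g-1)\bigr)^2,
\]
both strictly positive for $\theta\in[0,1)$, so the two roots share the sign $-\operatorname{sign}(B)$ and \eqref{sqe} has two, one, or no positive root according as $[B<0,\ B^2-4AC>0]$, $[B<0,\ B^2-4AC=0]$, or neither. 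The point that makes the "simple but long calculations" short is the identity $B=A(1-g)$, obtained by feeding the defining relation $g^2=\lambda(1-\theta)\bigl[(1-\theta)+(1+\theta)g\bigr]$ for $g$ back into $B$; it gives $B<0\Longleftrightarrow g>1$ and
\[
B^2-4AC=\lambda^2(1-\theta)^2\Bigl[(1-\theta)^2(g-1)^2-4\bigl(1+\theta(g-1)\bigr)^2\Bigr],
\]
which is $\ge0$ exactly when $(1-3\theta)(g-1)\ge 2$.

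From here the three regimes should drop out. If $\theta\ge\theta_c(2)=\tfrac13$, then $(1-3\theta)(g-1)\le0<2$, the discriminant is always negative, \eqref{sqe} has no positive root, and $\mu^*$ is the only TISGM — statement 1). If $\theta<\tfrac13$, then $g(\theta,\lambda)$ is strictly increasing in $\lambda$ with range $(0,\infty)$, so $(1-3\theta)(g-1)\ge2$ is equivalent to $\lambda\ge\lambda_*(\theta)$ for a unique threshold, and solving $g(\theta,\lambda_*)-1=\tfrac{2}{1-3\theta}$ together with the relation for $g$ should give precisely $\lambda_*(\theta)=\lambda_{\rm cr}(2)=\tfrac{9/4}{1-3\theta}$; note that on and beyond this threshold $g-1\ge\tfrac{2}{1-3\theta}>0$, so $g>1$ and hence $B<0$ automatically. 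Consequently: for $\lambda<\lambda_{\rm cr}(2)$ there is no off‑diagonal solution; for $\lambda=\lambda_{\rm cr}(2)$ the unique (double) root is $x=\tfrac{g-1}{2}$, for which $y=g-1-x=x$, i.e. it is the diagonal solution again and contributes no new measure — so $\mu^*$ is the unique TISGM, statement 2.a). For $\lambda>\lambda_{\rm cr}(2)$ there are two distinct positive roots $x_1^*<x_2^*$ with $x_1^*+x_2^*=g-1$, so $y=g-1-x_i^*$ gives the off‑diagonal solutions $(x_1^*,x_2^*)$ and $(x_2^*,x_1^*)$; both have positive entries, are off the diagonal, and are distinct from each other and from $(x^*,x^*)$, so together with $\mu^*$ we get exactly three TISGMs $\mu^*,\mu^*_1,\mu^*_2$, statement 2.b).

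I expect the only genuine obstacle to be the discriminant bookkeeping — the "simple but long" step — which the identity $B=A(1-g)$ together with the quadratic satisfied by $g$ tames to a short computation. The points I would be careful to spell out are: that every positive root of \eqref{sqe} automatically satisfies $y=g-1-x>0$ (true since the two roots are positive and sum to $g-1$, and $g>1$ whenever a root exists); that the double root at $\lambda=\lambda_{\rm cr}(2)$ is genuinely on the diagonal, so that the boundary case belongs to 2.a); and the sanity check that setting $\theta=0$ returns $\lambda_{\rm cr}(2)=\tfrac94$, matching the known Hard‑Core threshold.
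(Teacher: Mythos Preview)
Your approach is correct and follows exactly the route the paper takes: separate the unique diagonal solution (Proposition~\ref{p<}) from the off-diagonal ones via \eqref{2h}--\eqref{sqe}, then count positive roots of the quadratic \eqref{sqe}. You have in fact supplied the step the paper leaves as ``simple but long calculations'': the identity $B=A(1-g)$, obtained by feeding the defining relation $g^2=\lambda(1-\theta)[(1-\theta)+(1+\theta)g]$ back into $B$, is precisely what makes the discriminant analysis short, and your observations that the boundary case $\lambda=\lambda_{\rm cr}(2)$ gives the double root $x=(g-1)/2=y$ (hence no new measure) and that $y=g-1-x>0$ automatically for any positive root are exactly the checks needed to finish the argument.
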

Note that in each case of Theorem \ref{t<} one of the TISGMs corresponds to the unique solution mentioned in Proposition \ref{p<}. 	
Note for $\beta=\infty$, i.e. $\theta=0$ this fits to the results known for the Hard-Core model (see \cite{RSh}, \cite{Ro}).\\

{\it SubCASE: $J>0$, $k\geq 3$.} In this case we shall find explicit values
of $\theta_{\rm c}=\theta_{\rm c}(k)$ and $\lambda_{\rm c}=\lambda_{\rm c}(k,\theta)$ such that
if $\theta<\theta_{\rm c}$ and $\lambda>\lambda_{\rm c}$ then there are at least three TISGMs.

Denoting $\sqrt[k]{x}=u>0, \ \sqrt[k]{y}=v>0$, $\sqrt[k]{\lambda}=a$  we obtain the
system of equations

\begin{equation}\label{rus3.5} \left\{\begin{array}{ll}
u=a{1+ u^k+\theta v^k \over 1+u^k+v^k},\\[3 mm]
v=a{1+ \theta u^k+v^k\over 1+u^k+v^k}
\end{array}\right.
\end{equation}
from the system (\ref{exy}).

Since $\theta<1$,  if $(u,v)$ is a solution to (\ref{rus3.5}) then we have
\begin{equation}\label{uav}
a\theta< u<a, \ \ \ a\theta<v<a.
\end{equation}

{\it SubsubCASE: $J>0$, $k=3$.}  Denote sum and product of $u$ and $v$ by
\begin{equation}\label{sp}
s=u+v, \ \ p=uv.
\end{equation}
Dividing the first equation of (\ref{rus3.5}) by the second one (for $k=3$) we get (since $u\ne v$)
\begin{equation}\label{3s}
{u\over v}={1+u^3+\theta v^3\over 1+\theta u^3+v^3} \ \ \Rightarrow \ \ 1+\theta(u^3+v^3)+(\theta-1)(u^2v+uv^2)=0.
\end{equation}
Using the representations
$$u^3+v^3=s(s^2-3p), \ \  u^2v+uv^2=sp,$$
we get from (\ref{3s})  that
\begin{equation}\label{p}
p={1+\theta s^3\over (1+2\theta)s}.
\end{equation}
Adding the first equation of (\ref{rus3.5}) to the second one (for $k=3$) we get
$$u+v=a\cdot {2+(1+\theta)(u^3+v^3)\over 1+u^3+v^3}.$$
This equality by using (\ref{sp}) can be represented as

\begin{equation}\label{s}
s=a\cdot {2+(1+\theta)s(s^2-3p)\over 1+s(s^2-3p)}.
\end{equation}
Therefore the system (\ref{rus3.5}) is represented as system of equations (\ref{p}) and (\ref{s}).
Substituting $p$ in (\ref{s}) we obtain
\begin{equation}\label{sa}
s=a\cdot {(1+\theta)s^3-1\over s^3-2}.
\end{equation}
Let $s$ be a solution to (\ref{sa}). Then to find the corresponding $u$ and $v$, by (\ref{sp}) we should solve
$$u+v=s, \ \ uv={1+\theta s^3\over (1+2\theta)s}.$$
This can be reduced to the quadratic equation
\begin{equation}\label{u70}
u^2-su+{1+\theta s^3\over (1+2\theta)s}=0.
\end{equation}
The discriminant of this equation is non-negative iff
$$(1-2\theta)s^3-4\geq 0.$$
From the last inequality we see that $\theta$ and $s$ should satisfy
\begin{equation}\label{ts}
\theta< \theta_c(3):={1\over 2}, \ \ \ s^3\geq {4\over 1-2\theta}.
\end{equation}
Find $a$ from (\ref{sa}):
\begin{equation}\label{aa}
a=s\cdot {s^3-2\over (1+\theta)s^3-1}=:\eta(s).
\end{equation}
We have $\eta'(s)=((1+\theta)s^3-1)^{-2}((1+\theta)s^6+4\theta s^3+2)>0$,  consequently, $a$ is an increasing function of $s$,
for each $s$ satisfying (\ref{ts}),
by (\ref{aa}) we get a unique $a$. The minimal value of $a$ is
$$a_{\min}=\min_{s\geq \sqrt[3]{{4\over 1-2\theta}}}\eta(s)=\eta(\sqrt[3]{{4\over 1-2\theta}})={2\over 3}\sqrt[3]{{4\over 1-2\theta}}.$$
Thus if $a<a_{\min}$ then equation (\ref{aa}) has no solution $s$; if $a\geq a_{\min}$ then equation (\ref{aa}) has a unique solution
$s^*=s(a)$.  For this unique solution from (\ref{u70}) we obtain
\begin{itemize}
\item one value of $u$: $u=u^*$ if $s^*=\sqrt[3]{{4\over 1-2\theta}}$, i.e., $a=a_{\min}$.
In this case by (\ref{u70}), we get $u=v=u^*$, i.e. the system (\ref{rus3.5}) has a unique solution $(u^*,u^*)$.
\item two value of $u$: $u=u_1, u_2$ if  $s^*>\sqrt[3]{{4\over 1-2\theta}}$, i.e., $a>a_{\min}$.
In this case the system (\ref{rus3.5}), outside of the line $u=v$, has exactly two solutions  $(u,v)=(u_1, u_2)$ and $(u_2,u_1)$.
\end{itemize}

Denote
$$\lambda_{\rm cr}(3)=a^3_{\min}={1\over 2-4\theta}\cdot ({4\over 3})^3.$$

Summarize results of this subsubsection in the following
\begin{thm}\label{tk3} For the SCWR-model, in the ferromagnetic case $J>0$, for $k=3$, the following assertions hold:
\begin{itemize}
\item[1)] If $\theta\geq\theta_{c}(3)$ then there exists a unique TISGM.
\item[2)] If $\theta<\theta_{c}(3)$ then
\begin{itemize}
\item[2.a)] if $\lambda\leq\lambda_{{\rm cr}}(3)$ then there is a unique TISGM, denoted by $\mu^*$.
\item[2.b)] if $\lambda>\lambda_{{\rm cr}}(3)$ then there are exactly three TISGMs, denoted by $\mu^*$, $\mu^*_i$, $i=1,2$.
\end{itemize}
\end{itemize}
\end{thm}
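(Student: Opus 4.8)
The plan is to recast the statement as a counting problem for the positive solutions $(x,y)$ of the fixed-point system \eqref{exy} with $k=3$, and then to assemble the answer from the case analysis already carried out in this subsubsection. Recall that by Theorem~\ref{cc} and the bijection between translation-invariant boundary laws and splitting Gibbs measures recalled in the introduction, the TISGMs are in one-to-one correspondence with the pairs $(x,y)$, $x,y>0$, solving \eqref{exy}, and distinct solutions yield distinct measures; so it suffices to count these solutions. By Proposition~\ref{p<} there is always exactly one solution on the diagonal $y=x$, which produces the measure $\mu^*$ and already accounts for the unique TISGM asserted in 1) and 2.a). Everything therefore reduces to counting the off-diagonal solutions, which by Lemma~\ref{le} come in pairs $(x,y),(y,x)$.

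For the off-diagonal count I would pass, as above, to the variables $u=\sqrt[3]{x}$, $v=\sqrt[3]{y}$, $a=\sqrt[3]{\lambda}$ of \eqref{rus3.5} and to $s=u+v$, $p=uv$. Taking the ratio of the two equations of \eqref{rus3.5} and using $u\neq v$ gives \eqref{3s}, hence the formula \eqref{p} for $p$ in terms of $s$; taking their sum gives \eqref{s}; and substituting \eqref{p} into \eqref{s} produces the scalar equation \eqref{sa} for $s$, after which $u,v$ are recovered as the roots of the quadratic \eqref{u70}. This reduction is reversible: given $s>0$ solving \eqref{sa} and $p$ defined by \eqref{p}, the roots $u,v$ of \eqref{u70} satisfy both the sum and, when $u\neq v$, the ratio of the two equations of \eqref{rus3.5}, and since the two right-hand sides of \eqref{rus3.5} are positive this forces each equation separately; the degenerate case $u=v$ only reproduces the equation satisfied on the diagonal. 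Hence the off-diagonal solutions are exactly the ordered pairs of roots of \eqref{u70} as $s$ ranges over the solutions of \eqref{sa} for which the discriminant of \eqref{u70} is positive.

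Next I would solve \eqref{sa} through the substitution $a=\eta(s)$ of \eqref{aa}: the identity $\eta'(s)=((1+\theta)s^3-1)^{-2}\big((1+\theta)s^6+4\theta s^3+2\big)>0$ shows $\eta$ is strictly increasing, so \eqref{sa} has at most one solution $s^*=s(a)$, and it has one precisely when $a\ge a_{\min}=\eta\big(\sqrt[3]{4/(1-2\theta)}\big)=\tfrac{2}{3}\sqrt[3]{4/(1-2\theta)}$, the value of $\eta$ at the left endpoint $\sqrt[3]{4/(1-2\theta)}$ of the admissible range of $s$. Admissibility of $s$ is precisely non-negativity of the discriminant of \eqref{u70}, i.e. $(1-2\theta)s^3\ge4$, which is possible for some $s>0$ only when $\theta<\tfrac{1}{2}=\theta_c(3)$; for $\theta\ge\theta_c(3)$ there is thus no off-diagonal solution and $\mu^*$ is the only TISGM, proving 1). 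For $\theta<\theta_c(3)$, cubing $a\ge a_{\min}$ gives $\lambda\ge\lambda_{\rm cr}(3)=a_{\min}^3=\tfrac{1}{2-4\theta}(4/3)^3$. If $\lambda<\lambda_{\rm cr}(3)$ there is no admissible $s$, hence no off-diagonal solution, so $\mu^*$ is unique; if $\lambda=\lambda_{\rm cr}(3)$ the unique $s^*$ makes the discriminant of \eqref{u70} vanish, its double root gives $u=v$, and by the uniqueness in Proposition~\ref{p<} this is the diagonal solution and contributes no new measure — completing 2.a). If $\lambda>\lambda_{\rm cr}(3)$ the discriminant is strictly positive, so \eqref{u70} has two distinct real roots, both positive since their sum $s^*>0$ and product $p=(1+\theta (s^*)^3)/((1+2\theta)s^*)>0$ (one may also invoke the a priori bounds \eqref{uav}); these give the two off-diagonal solutions $(u_1,u_2)$ and $(u_2,u_1)$, distinct from each other and from the diagonal one, so there are exactly three TISGMs, proving 2.b).

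The only genuinely delicate point is the word ``exactly'': one must be sure that no off-diagonal solution is overlooked and none is counted twice. This is precisely what the strict monotonicity $\eta'>0$ secures, since it makes $s\mapsto a$ a bijection of the admissible interval onto $[a_{\min},\infty)$, together with the equivalence between \eqref{rus3.5} and the pair \eqref{sa}, \eqref{u70} noted above and the automatic positivity of the two roots; the rest (that the double-root case at $\lambda=\lambda_{\rm cr}(3)$ really reproduces the Proposition~\ref{p<} solution, and that for $\lambda>\lambda_{\rm cr}(3)$ the two ordered pairs are genuinely distinct) is routine. Specializing to $\theta=0$ recovers the Hard-Core statement of \cite{XR1}, a useful consistency check.
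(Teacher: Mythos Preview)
Your argument is correct and follows the paper's own route essentially line by line: the change of variables $u=x^{1/3},v=y^{1/3}$, the reduction via $s=u+v$, $p=uv$ to the scalar equation \eqref{sa} and the quadratic \eqref{u70}, the discriminant analysis yielding $\theta_c(3)=\tfrac12$, and the monotonicity $\eta'>0$ giving a unique admissible $s^*$ and hence the exact count. If anything you are slightly more explicit than the paper about the reversibility of the $(u,v)\leftrightarrow(s,p)$ reduction and about why the double-root case at $\lambda=\lambda_{\rm cr}(3)$ collapses to the diagonal solution of Proposition~\ref{p<}; the only minor quibble is that invoking \eqref{uav} for positivity of $u,v$ is circular, but your primary argument via $s^*>0$ and $p>0$ already suffices.
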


{\it SubsubCASE $k\geq 4$.}
Find $v^k$ from the first equation of system (\ref{rus3.5})
and $u^k$ from the second equation:
$$v^k={(a-u)(1+u^k)\over u-a\theta}, \ \ u^k={(a-v)(1+v^k)\over v-a\theta}$$
and using this forms in (\ref{rus3.5}) we get

\begin{equation}\label{rus3.6} \left\{\begin{array}{ll}
u=\gamma(v)\\[2mm]
v=\gamma(u),
\end{array}\right.
\end{equation} where
$$\gamma(u)=a(1+\theta)-u+{u-a\theta\over
1+u^k}.$$
The following lemma is useful.

\begin{lemma}\label{l1} \cite{Kes} Let $f:[a,b]\rightarrow [a,b]$
be a continuous function with a fixed point $\xi \in (a,b)$. We
assume that $f$ is differentiable at $\xi$ and $f^{'}(\xi)<-1.$
Then there exist points $x_0$ and $x_1$, $ a\leq x_0<\xi<x_1
\leq b,$ such that $f(x_0)=x_1$ and $f(x_1)=x_0.$
\end{lemma}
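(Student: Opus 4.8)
The plan is to pass to the second iterate $F:=f\circ f$ and to extract from it, by one carefully set up application of the intermediate value theorem, a genuine two-cycle of $f$ straddling $\xi$. First I would record the two local facts at $\xi$ coming from $f'(\xi)<-1$: (a) since $f$ is differentiable at $\xi$ with $f'(\xi)<0$, there is a small $\rho>0$ (with $\xi+\rho<b$) such that $f(x)<\xi$ for all $x\in(\xi,\xi+\rho]$ --- a ``flip'' just to the right of $\xi$; and (b) by the chain rule $F$ is differentiable at $\xi$ with $F'(\xi)=f'(\xi)^2>1$, so the continuous function $\eta(x):=F(x)-x=f(f(x))-x$ satisfies $\eta(\xi)=0$ and $\eta'(\xi)=f'(\xi)^2-1>0$; shrinking $\rho$ if necessary we may also assume $\eta>0$ on $(\xi,\xi+\rho]$.

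Next I would use that $f$ maps $[a,b]$ into $[a,b]$, so $\eta(b)=f(f(b))-b\le 0$. Hence the set $\{x\in[\xi+\rho,b]:\eta(x)\le 0\}$ is closed and nonempty (it contains $b$), and I let $x_1$ be its minimum. Since $\eta(\xi+\rho)>0$ we get $x_1>\xi+\rho$; by minimality $\eta>0$ on $[\xi+\rho,x_1)$, which together with $\eta>0$ on $(\xi,\xi+\rho]$ gives $\eta>0$ on all of $(\xi,x_1)$; and $\eta(x_1)=0$, because $\eta(x_1)\le0$ while $\eta(x_1)=\lim_{x\uparrow x_1}\eta(x)\ge0$. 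Thus $f(f(x_1))=x_1$ and $x_1>\xi$.

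The real content of the lemma --- and the step I expect to be the main obstacle --- is to show $f(x_1)<\xi$, i.e. that the zero $x_1$ of $\eta$ is actually one half of a two-cycle interchanged across $\xi$, rather than a fixed point of $f$ or one half of a two-cycle lying entirely on one side of $\xi$ (the naive IVT on $\eta$ alone cannot distinguish these). This is where the flip (a) and the positivity of $\eta$ on $(\xi,x_1)$ must be combined. Concretely: if $f(x_1)=\xi$ then $x_1=f(f(x_1))=f(\xi)=\xi$, a contradiction; and if $f(x_1)>\xi$ then, since $f(\xi+\rho)<\xi<f(x_1)$ by (a), the intermediate value theorem gives an $x'\in(\xi+\rho,x_1)$ with $f(x')=\xi$, whence $\eta(x')=f(\xi)-x'=\xi-x'<0$, contradicting $\eta>0$ on $(\xi,x_1)$. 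Therefore $f(x_1)<\xi$.

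Finally I would set $x_0:=f(x_1)$. Then $x_0\in[a,\xi)$, $f(x_0)=f(f(x_1))=x_1$ and $f(x_1)=x_0$, so $a\le x_0<\xi<x_1\le b$, which is the assertion. The only tools used are continuity of $f$, the chain rule at $\xi$, and the intermediate value theorem, with the hypothesis $f'(\xi)<-1$ entering only through (a) and (b).
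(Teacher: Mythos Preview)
Your proof is correct. The paper does not actually prove this lemma; it is quoted from Kesten \cite{Kes} and used as a black box, so there is no ``paper's own proof'' to compare against. Your argument---passing to the second iterate, locating the first zero $x_1>\xi$ of $\eta(x)=f(f(x))-x$ to the right of $\xi$, and then using the local flip $f(\xi+\rho)<\xi$ together with the positivity of $\eta$ on $(\xi,x_1)$ to rule out $f(x_1)\ge\xi$---is clean and fully self-contained. The key step you flagged as the ``main obstacle'' (excluding that $x_1$ is a fixed point or half of a same-side two-cycle) is handled correctly: the intermediate-value argument producing $x'$ with $f(x')=\xi$ and hence $\eta(x')<0$ is exactly what is needed.
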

We shall use this lemma for our function $\gamma$.
It is clear that the function $\gamma(x)$ is continuous and
differentiable.

\begin{lemma}\label{l0} For any $x\in [a\theta, a]$ we have $\gamma'(x)<0$.
\end{lemma}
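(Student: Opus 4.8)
\emph{Proof plan.} The plan is to compute $\gamma'$ explicitly and to reduce the asserted negativity to an elementary inequality in $u$. Differentiating $\gamma(u)=a(1+\theta)-u+\frac{u-a\theta}{1+u^k}$ with the quotient rule gives
$$\gamma'(u) = -1 + \frac{(1+u^k) - k u^{k-1}(u-a\theta)}{(1+u^k)^2}.$$
Since $(1+u^k)^2>0$, multiplying through and collecting terms shows that $\gamma'(u)<0$ is equivalent to
$$k a\theta\, u^{k-1} < (k+1)u^k + u^{2k}.$$

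The second step is to verify this last inequality on $[a\theta,a]$. Here $a=\sqrt[k]{\lambda}>0$ and, in the ferromagnetic case $J>0$, $\theta=e^{-J\beta}\in(0,1)$, so $a\theta>0$; hence for $x\in[a\theta,a]$ we have $u:=x\ge a\theta>0$, and in particular $u^{k-1}>0$. Dividing the displayed inequality by $u^{k-1}$ reduces the claim to $k a\theta < (k+1)u + u^{k+1}$. Using $u\ge a\theta$ one gets $(k+1)u + u^{k+1} \ge (k+1)a\theta + (a\theta)^{k+1} = k a\theta + a\theta + (a\theta)^{k+1} > k a\theta$, the last step being strict because $a\theta>0$. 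This holds for every $u\in[a\theta,a]$, so $\gamma'(x)<0$ throughout the interval.

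There is essentially no substantive obstacle here; the only points needing care are the strict positivity of the denominator $(1+u^k)^2$ and of the factor $u^{k-1}$ — the latter being precisely where the left endpoint $a\theta>0$, rather than $0$, is used — and the fact that the final comparison is strict rather than merely weak. I note also that the upper bound $u\le a$ plays no role, so in fact $\gamma'<0$ on all of $[a\theta,\infty)$; this strict monotonicity of $\gamma$ is a convenient input for the subsequent analysis of the recursion \eqref{rus3.6} via Lemma \ref{l1}.
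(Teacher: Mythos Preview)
Your proof is correct and structurally the same as the paper's: both compute $\gamma'$ explicitly and reduce the claim $\gamma'(x)<0$ to the polynomial inequality $ka\theta\,x^{k-1}<(k+1)x^k+x^{2k}$, equivalently $\phi(x):=x^{k+1}+(k+1)x-ka\theta>0$, on $[a\theta,a]$. The only difference is in how this inequality is verified. The paper argues indirectly: it invokes Descartes' rule of signs to conclude $\phi$ has at most one positive root, then uses $\phi(0)<0<\phi(a\theta)$ to locate that root in $(0,a\theta)$, hence $\phi>0$ on $[a\theta,a]$. You instead simply substitute $u\ge a\theta$ into $(k+1)u+u^{k+1}$ and observe that the result strictly exceeds $ka\theta$. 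Your route is more elementary (no need for Descartes' rule) and, as you note, actually yields $\gamma'<0$ on all of $[a\theta,\infty)$; the paper's approach, on the other hand, pinpoints the turning point $\hat x$ of $\gamma$ precisely, which is not needed here but could be useful if one wanted finer information about $\gamma$ on $(0,a\theta)$.
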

\begin{proof}
We have
\begin{equation}\label{gh}
\gamma'(x)=-{x^{2k}+(k+1)x^k-ka\theta x^{k-1}\over (x^k+1)^2}.
\end{equation}
Note that $\gamma'(x)=0$, i.e. $x^{k+1}+(k+1)x-ka\theta=0$ has a unique positive
root $\hat x$. Indeed, it is well known (see \cite[p.28]{Pra})\footnote{This is known as the Descartes rule: The number of positive roots of the polynomial $p(x)=a_0x^n+a_1x^{n-1}+\dots+a_n$ does not exceed the number of sign changes in the sequence $a_0, a_1,\dots,a_n$.} that the number of positive
roots of a polynomial does not exceed the number of sign
changes of its coefficients. Using this fact one can see that the
equation $\phi(x):=x^{k+1}+(k+1)x-ka\theta=0$ has up to one positive root.
Since $\phi(0)=-ka\theta <0$ and $\phi(a\theta)=(a\theta)^{k+1}+a\theta>a\theta>0$, the equation
has exactly one solution $\hat x\in (0, a\theta)$, i.e. $\hat x<a\theta$.
Therefore $\phi(x)>0$ for all $x\in [a\theta, a]$. This completes the proof.
\end{proof}

By this lemma it follows that the function $\gamma(x)$
is decreasing in $(a\theta, a)$. Moreover, by (\ref{uav}) it is clear
that $\gamma(u)>a\theta$, $\gamma(a\theta)=a$, and
$\gamma(a)=a\cdot{1+\theta a^k\over 1+a^k}<a,$ i.e. $\gamma:[a\theta,a]\rightarrow
[a\theta,a].$ Consequently, the
equation $\gamma(x)=x$ has a unique solution $x=\xi\in (a\theta, a).$

Since $\xi$ is
a fixed point of $\gamma$, we have
\begin{equation}\label{fp}
\xi=a(1+\theta)-\xi+{\xi-a\theta\over 1+\xi^k},
\end{equation}
consequently
\begin{equation}\label{fpp}\xi={a\cdot(1+(1+\theta)\xi^k)\over 1+2\xi^k}.\end{equation}

By (\ref{gh}) we get that $\gamma'(\xi)<-1$ can be written as
$$(k-1)\xi^k-ka\theta \xi^{k-1}-1>0,$$
in this inequality using (\ref{fpp}) we get the following polynomial inequality:
 $$[k-1-(k+1)\theta]\xi^{2k}+[k-2-(k+1)\theta]\xi^k-1>0.$$
To simplify formulas, we introduce
$$t=\xi^k, \ \ A=k-1-(k+1)\theta.$$
Then the last inequality can be written as
\begin{equation}\label{ta}
At^2+(A-1)t-1>0.
\end{equation}
The discriminant of the quadratic inequality is positive:
$$D=(A-1)^2+4A=(A+1)^2>0.$$
Therefore (\ref{ta}) can be written as (since $t>0$)
$$(t+1)(At-1)>0 \ \ \ \Rightarrow \ \ At>1.$$
The last inequality has solution iff
\begin{equation}\label{the}
A>0 \ \ \Rightarrow \ \ \theta<\theta_{\rm c}(k):={k-1\over k+1}.
\end{equation}
Then $t>{1\over A}$.
Hence we get
\begin{equation}\label{sk}
\xi>{1\over \sqrt[k]{A}}={1\over \sqrt[k]{k-1-(k+1)\theta}}.\end{equation}

From the equation $\gamma(\xi)=\xi$, we have
$$a={\xi+2\xi^{k+1}\over 1+(1+\theta)\xi^k}=:\varphi(\xi).$$
Note that
$$\varphi'(\xi)=1+{(1-\theta)\xi^k\over 1+(1+\theta)\xi^k}\left[1+{k\over 1+(1+\theta)\xi^k}\right]>0,$$
i.e. the function $\varphi(\xi)$ is increasing. So
$$a_{\min}=\min_{{\xi\geq {1\over \sqrt[k]{A}}}}\varphi(\xi)=\varphi({1\over \sqrt[k]{A}}),$$
this implies that
\begin{equation}\label{lam}
\lambda_{\min}=a_{\min}^k=\lambda_{\rm cr}(k):={1\over k-1-\theta(k+1)}\cdot\left({k+1\over k}\right)^k.
\end{equation}
Hence by Lemma \ref{l1} if $\lambda>\lambda_{\rm cr}$ then the system
(\ref{rus3.5}) has at least three solutions $(\xi,\xi), \ (x_0,y_0)$ and
$(y_0,x_0)$.

\begin{lemma} If $\theta\geq {k-1\over k}$ or $\theta<{k-1\over k}$ and $\lambda< {1\over k-1-k\theta}$, then
$$\lim_{n\rightarrow\infty}\gamma^{(n)}(\xi_0)=\xi$$
for any $\xi_0\in[a\theta, a]$, where $\gamma^{(n)}$ is $n$-iteration of
map $\gamma.$
\end{lemma}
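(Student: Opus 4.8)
The plan is to show that, under either of the two hypotheses, the map $\gamma$ is a strict contraction of $[a\theta,a]$ into itself. We already know (from the discussion around \eqref{fp}--\eqref{fpp}) that $\gamma$ maps $[a\theta,a]$ into itself and has there the unique fixed point $\xi\in(a\theta,a)$; hence a contraction bound $q:=\sup_{x\in[a\theta,a]}|\gamma'(x)|<1$ immediately gives, via the mean value theorem and $\gamma(\xi)=\xi$, the estimate $|\gamma^{(n)}(\xi_0)-\xi|\le q^{\,n}|\xi_0-\xi|\to0$ for every $\xi_0\in[a\theta,a]$, which is exactly the assertion.

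So the whole proof reduces to one inequality: $\gamma'(x)>-1$ for all $x\in[a\theta,a]$ (together with Lemma \ref{l0}, which gives $\gamma'(x)<0$ on this compact interval, this yields $q<1$). Using the explicit expression \eqref{gh} for $\gamma'$ and the identity $(x^k+1)^2=x^{2k}+2x^k+1$, the inequality $\gamma'(x)>-1$ is equivalent to
\[
\psi(x):=(k-1)x^k-ka\theta\,x^{k-1}-1<0 .
\]
Next I would observe that $\psi'(x)=k(k-1)x^{k-2}(x-a\theta)\ge0$ on $[a\theta,a]$, so $\psi$ is nondecreasing there and therefore $\max_{x\in[a\theta,a]}\psi(x)=\psi(a)=a^k(k-1-k\theta)-1=\lambda(k-1-k\theta)-1$. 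Thus $\psi<0$ on $[a\theta,a]$ is equivalent to $\lambda(k-1-k\theta)<1$, and this is precisely the hypothesis of the lemma: if $\theta\ge\frac{k-1}{k}$ then $k-1-k\theta\le0$ and the inequality is automatic, while if $\theta<\frac{k-1}{k}$ then $k-1-k\theta>0$ and $\lambda<\frac1{k-1-k\theta}$ is just a rewriting of it. This finishes the reduction, and hence the proof.

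I do not anticipate a genuine obstacle. The only point that needs care is the formulation of the reduction: mere local attractivity of $\xi$, i.e.\ $|\gamma'(\xi)|<1$, is not enough for a decreasing map, since one would still have to exclude a globally attracting $2$-cycle; instead one should target the uniform bound $\sup_{[a\theta,a]}|\gamma'|<1$. Once that is the goal, the monotonicity of $\psi$ makes the boundary value $\psi(a)$ decisive and, conveniently, matches the two cases of the hypothesis on the nose, after which only the routine contraction argument remains.
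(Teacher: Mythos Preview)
Your argument is correct and essentially identical to the paper's: both reduce $\gamma'(x)>-1$ on $[a\theta,a]$ to $\psi(x)<0$, use $\psi'(x)=k(k-1)x^{k-2}(x-a\theta)\ge0$ to conclude that it suffices to check $\psi(a)=(k-1-k\theta)\lambda-1<0$, and then invoke the contraction property. Your write-up is in fact slightly more complete, since you spell out the contraction estimate $|\gamma^{(n)}(\xi_0)-\xi|\le q^{\,n}|\xi_0-\xi|$ that the paper leaves implicit.
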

\begin{proof} By Lemma \ref{l0} we have that $\gamma'(x)<0$. Now $-1\leq \gamma'(x)$ is equivalent to
 $$\psi(x):=(k-1)x^k-ka\theta x^{k-1}-1\leq 0.$$
We should solve this inequality for $x\in [a\theta, a].$ Note that 
$\psi'(x)=k(k-1)x^{k-2}(x-a\theta)>0$, in $(a\theta,a]$, i.e. $\psi$ is 
an increasing function. It is clear that $\psi(x)=0$ has a unique positive solution, denote it by $\tilde x$. Moreover, 
$\psi(x)<0$, if $x<\tilde x$.
We have $\psi(a\theta)=-(a\theta)^k-1<0$. Note that if $\tilde x > a$, then
$\gamma'(x)\in (-1,0)$ for each $x\in [a\theta, a]$, i.e. $\gamma$ is contracting function.
From $\tilde x > a$, then we take $\psi(a)<0$, i.e.
$$(k-1)a^k-ka\theta a^{k-1}-1=(k-1-k\theta)a^k-1< 0.$$
It is easy to see that the last inequality is true iff the conditions of the lemma are satisfied (recall $\lambda=a^k$).
\end{proof}
From this Lemma it follows that under its conditions the equation $\gamma(\gamma(x))=x$ has the unique solution
$x=\xi$.
Denote
\begin{equation}\label{thla}
\theta_{\rm c}'(k):={k-1\over k}, \ \ \ \lambda_{\rm cr}'(k):= {1\over k-1-k\theta}.
\end{equation}

Thus, we proved the following

\begin{thm}\label{tkk} Consider the ferromagnetic model, and let $k\geq 4$. Then
\begin{itemize}
\item[1)] If $\theta\geq \theta'_{\rm c}(k)$ or $\theta< \theta'_{\rm c}(k)$ and $\lambda<\lambda'_{\rm cr}(k)$ then there exists a unique TISGM.

\item[2)] If $\theta'_c(k)>\theta\geq \theta_{\rm c}(k)$ or $\theta< \theta_{\rm c}(k)$ and $\lambda_{\rm cr}'(k)<\lambda\leq\lambda_{\rm cr}(k)$ then there exists at least one TISGM.

\item[3)] For  $\theta< \theta_{\rm c}(k)$ and $\lambda>\lambda_{\rm cr}(k)$ there exist at least three
TISGMs.
The critical values are defined in (\ref{the}), (\ref{lam}) and (\ref{thla}).
\end{itemize}
\end{thm}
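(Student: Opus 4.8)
The plan is to assemble the three assertions from everything established so far in this subsubcase; essentially no new computation is needed beyond bookkeeping. First I would recall the reduction: by Theorem \ref{cc} and the substitutions that follow it, TISGMs are in bijection with positive solutions of \eqref{exy}, hence (writing $u=\sqrt[k]{x}$, $v=\sqrt[k]{y}$, $a=\sqrt[k]{\lambda}$) with positive solutions of \eqref{rus3.5}, hence with the solutions of the pair \eqref{rus3.6}, $u=\gamma(v)$, $v=\gamma(u)$, lying in $[a\theta,a]^2$. As noted right after Lemma \ref{l0}, the map $\gamma$ sends $[a\theta,a]$ into itself and, being strictly decreasing there, has a unique fixed point $\xi\in(a\theta,a)$; this $\xi$ gives the diagonal solution $(\xi,\xi)$ and the TISGM of Proposition \ref{p<}, so there is always at least one TISGM. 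Any further solution of \eqref{rus3.6} is an off-diagonal point, i.e. a genuine $2$-cycle $\{x_0,y_0\}$ of $\gamma$ with $x_0\neq y_0$, and by Lemma \ref{le} these come in symmetric pairs $(x_0,y_0)$, $(y_0,x_0)$. So the problem reduces to counting the fixed points of $\gamma\circ\gamma$ distinct from $\xi$.

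Assertion 1) then follows at once from the global-attraction Lemma proved just above the theorem: under $\theta\geq\theta'_{\rm c}(k)$, or under $\theta<\theta'_{\rm c}(k)$ together with $\lambda<\lambda'_{\rm cr}(k)$, one has $\gamma^{(n)}(\xi_0)\to\xi$ for every $\xi_0\in[a\theta,a]$, so $\gamma$ has no $2$-cycle and \eqref{rus3.6} has the single solution $(\xi,\xi)$; the TISGM is unique.

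For assertion 3) the key step is to pin down exactly when $\xi$ is repelling, i.e. $\gamma'(\xi)<-1$. By \eqref{gh} this reads $(k-1)\xi^k-ka\theta\xi^{k-1}-1>0$; eliminating $a$ via the fixed-point identity \eqref{fpp} turns it into the quadratic inequality \eqref{ta}, $At^2+(A-1)t-1>0$ with $t=\xi^k$, $A=k-1-(k+1)\theta$. Its discriminant is $(A+1)^2$, so it factors as $(t+1)(At-1)>0$, i.e. $At>1$; since $t>0$ this forces $A>0$, that is $\theta<\theta_{\rm c}(k)=\frac{k-1}{k+1}$ as in \eqref{the}, and then $\xi>A^{-1/k}$ as in \eqref{sk}. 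As $a=\varphi(\xi)$ with $\varphi$ strictly increasing, the inequality $\xi>A^{-1/k}$ is equivalent to $a>\varphi(A^{-1/k})=a_{\min}$, i.e. to $\lambda>\lambda_{\rm cr}(k)$ of \eqref{lam}. On that region $\gamma'(\xi)<-1$, so Lemma \ref{l1}, applied to the continuous self-map $\gamma$ of $[a\theta,a]$ with interior fixed point $\xi$, yields $x_0<\xi<y_0$ with $\gamma(x_0)=y_0$, $\gamma(y_0)=x_0$. Then $(\xi,\xi)$, $(x_0,y_0)$, $(y_0,x_0)$ are three pairwise distinct solutions of \eqref{rus3.6}, whence at least three TISGMs.

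What is left, assertion 2), is the residual band $\theta_{\rm c}(k)\leq\theta<\theta'_{\rm c}(k)$, or $\theta<\theta_{\rm c}(k)$ with $\lambda'_{\rm cr}(k)<\lambda\leq\lambda_{\rm cr}(k)$; there the fixed point $\xi$ still exists and gives at least one TISGM, as asserted in 2), and this is the main obstacle, because in that band neither tool applies. On one hand $\gamma'(\xi)\geq-1$ there (for $A\leq0$ immediately, for $A>0$ precisely because $At>1$ fails), so Lemma \ref{l1} is silent; on the other hand the inequality $\psi(a)<0$ that drives the global-attraction Lemma fails as soon as $\lambda\geq\lambda'_{\rm cr}(k)$, i.e. $\gamma$ is no longer a contraction on all of $[a\theta,a]$, so that lemma is silent too, and one cannot presently exclude $2$-cycles of $\gamma$ sitting away from $\xi$. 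Turning this into an exact count there, and likewise upgrading at least three to exactly three in part 3), would require controlling all fixed points of the equation $\gamma(\gamma(x))=x$, whose degree grows with $k$ — e.g. via a sharper monotonicity or convexity analysis of $\gamma\circ\gamma$ on $[a\theta,a]$, since for $k=2,3$ this was possible by explicit algebra that is no longer available. I would expect the truth to be uniqueness for $\lambda\leq\lambda_{\rm cr}(k)$ and exactly three TISGMs for $\lambda>\lambda_{\rm cr}(k)$, in line with the conjecture, but that is the step I expect to be genuinely hard.
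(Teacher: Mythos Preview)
Your proposal is correct and follows essentially the same approach as the paper: the theorem is assembled from the material developed in this subsubcase, with part 1) coming from the global-attraction lemma, part 3) from the computation that $\gamma'(\xi)<-1$ exactly when $\theta<\theta_{\rm c}(k)$ and $\lambda>\lambda_{\rm cr}(k)$ together with Lemma \ref{l1}, and part 2) being the residual region where only the diagonal fixed point $\xi$ (Proposition \ref{p<}) is guaranteed. Your closing commentary on why parts 2) and 3) cannot presently be sharpened is exactly the content of the paper's Conjecture \ref{con} and the discussion following it.
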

\begin{figure}[!htb]
	\includegraphics[width=0.49\textwidth]{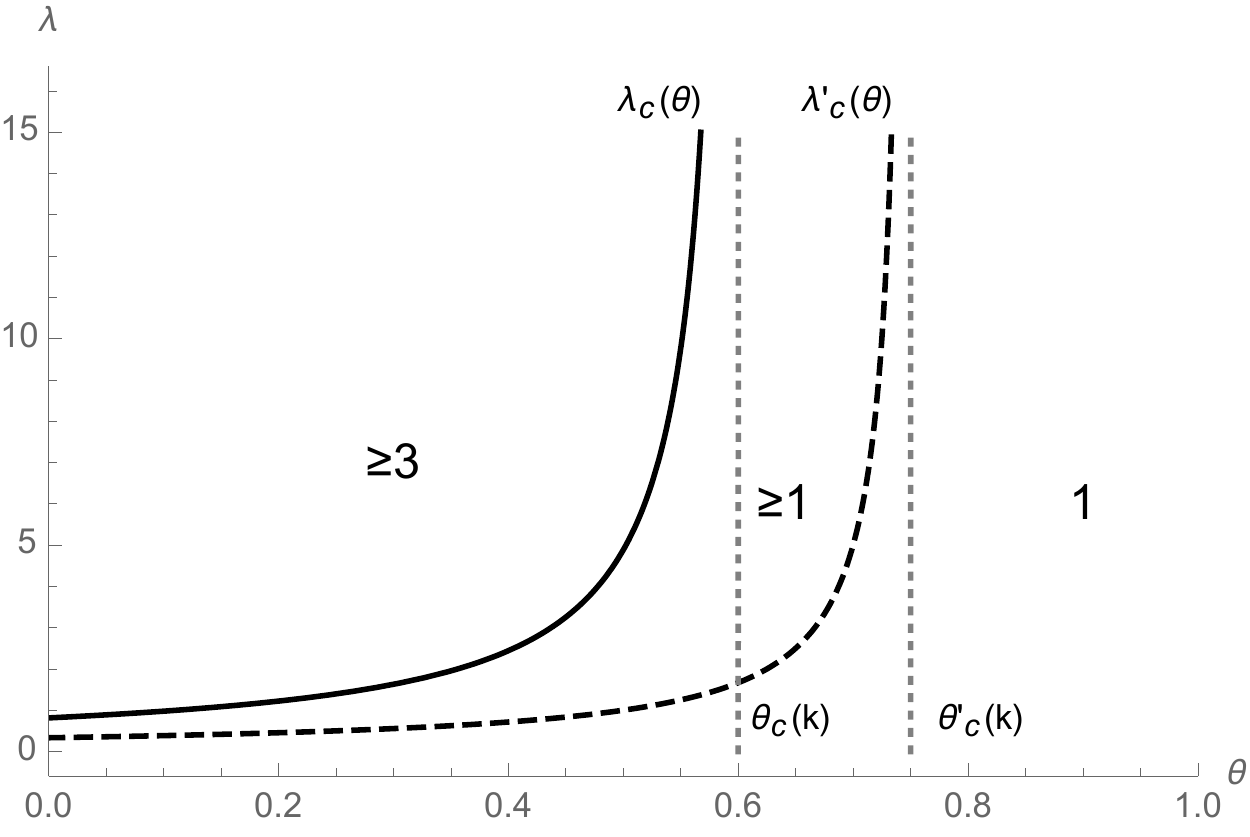}
	\includegraphics[width=0.49\textwidth]{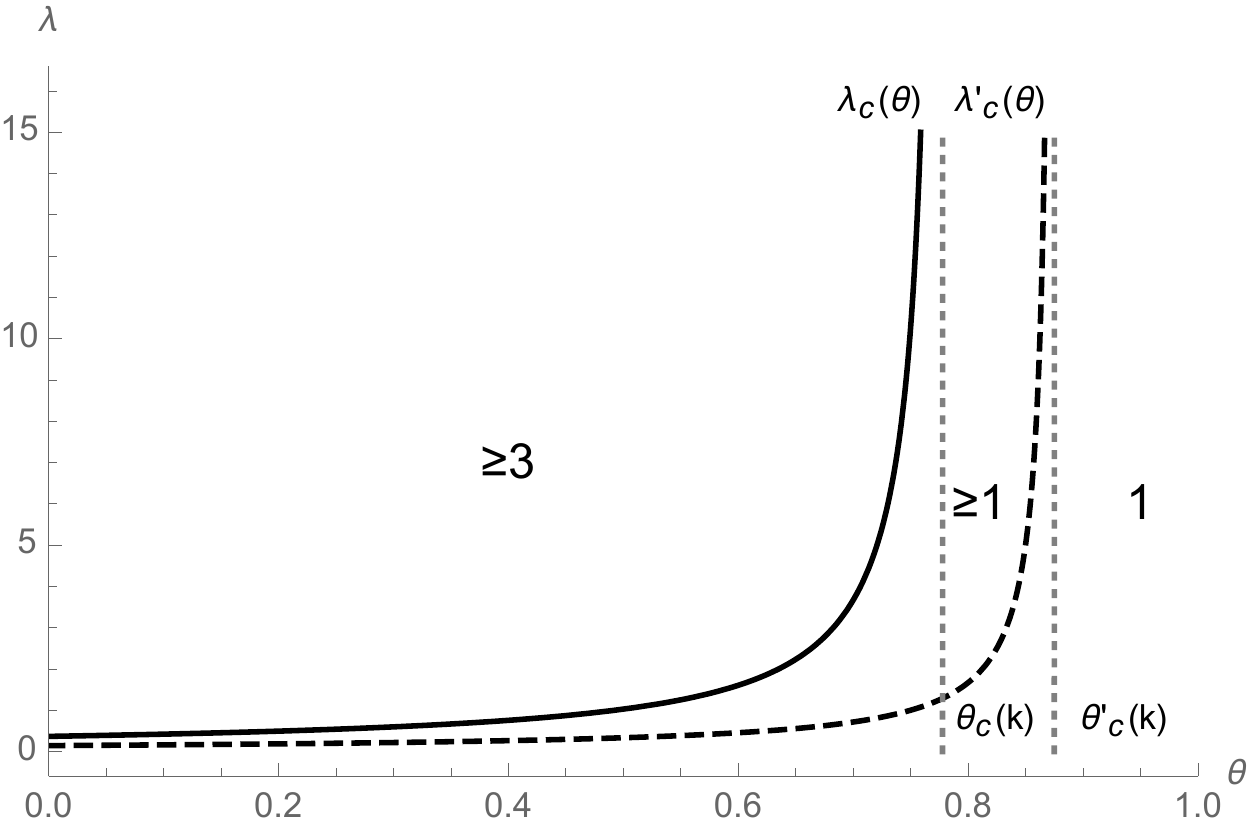}
	\caption{Curves of critical $\lambda$'s for $k=4$ (left) and $k=8$ (right). The numbers shown on the domains correespond to the number of TISGMs.}
\end{figure}

We have the following conjecture
\begin{con}\label{con} In the part 2) (resp. 3)) of Theorem \ref{tkk} the numbers of TISGM
 is  exactly one (resp. three).
\end{con}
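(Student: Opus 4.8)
The plan is to recast Conjecture \ref{con} as a statement about the number of fixed points of $\gamma^{(2)}=\gamma\circ\gamma$ on $[a\theta,a]$, and to settle that number by analysing the sign of $\psi(x):=\gamma(\gamma(x))-x$.

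\emph{Reduction.} By (\ref{rus3.6}), the solutions $(u,v)$ of (\ref{rus3.5}) with $u\ne v$ are precisely the pairs with $\gamma(u)=v$ and $\gamma(v)=u$, i.e.\ the $2$-cycles of $\gamma$, occurring in reflected pairs $(u,v),(v,u)$. Since $\gamma:[a\theta,a]\to[a\theta,a]$ is strictly decreasing (Lemma \ref{l0}) with the single fixed point $\xi$, any such cycle straddles $\xi$: $\gamma(u)=v>u$ gives $u<\xi$ and $\gamma(v)=u<v$ gives $v>\xi$. Hence the number of TISGMs equals $1+2m$, where $m$ is the number of unordered $2$-cycles, and the conjecture is the assertion that $m=0$ in the regime of part 2) and $m=1$ in the regime of part 3). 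In terms of $\psi$ on $[a\theta,a]$: any zero $z\ne\xi$ of $\psi$ is a fixed point of $\gamma^{(2)}$ but not of $\gamma$, so $\{z,\gamma(z)\}$ is a genuine $2$-cycle with $\gamma(z)\notin\{z,\xi\}$; thus the zeros of $\psi$ other than $\xi$ come in pairs $\{z,\gamma(z)\}$ straddling $\xi$, and it suffices to show that $\psi$ vanishes only at $\xi$ in the first regime and at exactly three points in the second.

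\emph{Sign data and the key fact.} From $\gamma(a\theta)=a$, $\gamma(a)=a\,\dfrac{1+\theta a^k}{1+a^k}\in(a\theta,a)$ and $\gamma([a\theta,a])\subseteq[\gamma(a),a]$ (Lemma \ref{l0}) one obtains $\psi(a\theta)=\gamma(a)-a\theta>0$ and $\psi(a)=\gamma(\gamma(a))-a<0$. Moreover $\psi'(\xi)=\gamma'(\xi)^2-1$, and the computation of $\gamma'(\xi)$ made through (\ref{gh})--(\ref{sk}) shows that $\psi'(\xi)>0$ exactly in the regime of part 3) (where $\theta<\theta_{\rm c}(k)$ and $\lambda>\lambda_{\rm cr}(k)$, equivalently $\gamma'(\xi)<-1$) while $\psi'(\xi)\le 0$ throughout the regime of part 2). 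Everything then rests on the following: $(\ast)$ \emph{the function $\psi'(x)=\gamma'(\gamma(x))\,\gamma'(x)-1$ has at most two zeros in $(a\theta,a)$.} Granting $(\ast)$, Rolle's theorem gives that $\psi$ has at most three zeros, so by the straddling-pair structure it has either exactly one or exactly three. If $\psi'(\xi)>0$, then $\psi<0$ immediately to the left of $\xi$ and $\psi>0$ immediately to the right, which together with $\psi(a\theta)>0>\psi(a)$ forces a zero in $(a\theta,\xi)$ and one in $(\xi,a)$: exactly three zeros and $m=1$, as in part 3). If $\psi'(\xi)\le 0$ and $\psi$ had three zeros $x_0<\xi<y_0$, then $\psi'$ would vanish in $(x_0,\xi)$ and in $(\xi,y_0)$, hence at exactly two points $c_1<c_2$ by $(\ast)$; since $\xi\in(c_1,c_2)$ and $\psi'(\xi)\le 0$ cannot be a third zero, $\psi'<0$ on $(c_1,c_2)$, so $\psi'$ has one sign on $(a\theta,c_1)$, and in either case a contradiction follows ($\psi'>0$ there would make $\psi$ increasing on $[a\theta,x_0]$ against $\psi(a\theta)>0=\psi(x_0)$; $\psi'<0$ there would make $\psi$ strictly decreasing on $(a\theta,c_2)$, impossible with the two zeros $x_0<\xi$ inside). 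Hence $\psi$ has exactly one zero and $m=0$, as in part 2); the boundary case $\psi'(\xi)=0$ (i.e.\ $\lambda=\lambda_{\rm cr}(k)$) is the same, three zeros of $\psi$ forcing three zeros of $\psi'$. This proves the conjecture.

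\emph{The main obstacle.} The whole content is now $(\ast)$, which is why the statement is only conjectured. Using $\gamma(x)=\dfrac{a\bigl(1+(1+\theta)x^k\bigr)}{1+2x^k}$ and the explicit formula (\ref{gh}) for $\gamma'$, clearing denominators turns $\psi'(x)=0$ into $P(x)=0$ for an explicit polynomial $P$ whose degree grows with $k$; one would like to bound the number of its roots in $(a\theta,a)$ by two, for instance via Descartes' rule of signs after a change of variable (as used for $\phi$ in Lemma \ref{l0}) or by proving directly that $\psi$ is $S$-shaped on $[a\theta,a]$. A second, more structural route --- generalising the $k=3$ computation --- is to eliminate $p=uv$: dividing and adding the two equations of (\ref{rus3.5}) and writing $u^k+v^k$ and $\frac{u^k-v^k}{u-v}$ as polynomials in $s=u+v$ and $p$, one finds that a $2$-cycle satisfies $\dfrac{u^k-v^k}{u-v}\bigl(s-a(1+\theta)\bigr)=1$ together with a second polynomial relation, and one would want a monotonicity statement --- the analogue of the increasing map $\eta$ in (\ref{aa}) for $k=3$ --- forcing at most one admissible $s$. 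The difficulty of both routes is their non-uniformity in $k$: the degree of $P$, respectively of the eliminant in $s$, grows with $k$, so neither a fixed Descartes count nor an ad hoc monotonicity computation works for all $k$ at once. A genuinely structural input seems to be needed --- for example a suitable convexity or monotonicity property of $x\mapsto\log\bigl(-\gamma'(x)\bigr)$ on $[a\theta,a]$, from which $(\ast)$ would follow --- and I expect establishing such an input to be the crux.
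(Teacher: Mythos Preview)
This statement is a \emph{conjecture}, and the paper does not prove it either. What the paper offers is an ``argument towards a proof'' that is quite different from yours: it rewrites (\ref{rus3.5}) as a linear system in $(a,T)=(a,a\theta)$, solves to get explicit symmetric rational expressions $a=a(u,v)$ and $T=T(u,v)$, and then applies Lagrange multipliers to locate critical points of $T$ on the level sets $\{a(u,v)=\text{const}\}$. By the $(u,v)$-symmetry, $u=v=\tfrac{ak}{k+1}$ is always critical, and evaluating there reproduces $\theta_{\rm c}(k)$ and $\lambda_{\rm cr}(k)$. The paper's declared open step is to show that $u=v$ actually gives the constrained \emph{minimum} of $T$; only numerical evidence for $k=4,\dots,15$ is reported.

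Your route---counting $2$-cycles of $\gamma$ via the sign pattern of $\psi(x)=\gamma(\gamma(x))-x$---is correct as far as it goes: the endpoint signs $\psi(a\theta)>0>\psi(a)$, the identity $\psi'(\xi)=\gamma'(\xi)^2-1$ together with the fact (extractable from (\ref{gh})--(\ref{lam})) that $\gamma'(\xi)<-1$ holds exactly in the regime of part 3), and the Rolle dichotomy conditional on $(\ast)$ are all sound. One small point: you write that off-diagonal solutions of (\ref{rus3.5}) are \emph{precisely} the $2$-cycles of $\gamma$; strictly, the paper's passage to (\ref{rus3.6}) only gives the forward implication. This is harmless for you, since the conjecture asks for an \emph{upper} bound on top of the lower bounds already in Theorem \ref{tkk}, and the forward implication suffices for that. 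Your honestly flagged gap $(\ast)$---that $\psi'$ has at most two zeros on $(a\theta,a)$---is genuine, and your diagnosis of why it resists a uniform treatment (the relevant polynomial degree grows with $k$) is on target. So you and the paper arrive at comparable obstacles from different directions: the paper leaves a two-variable variational question (is the symmetric critical point the minimum?), you leave a one-variable oscillation bound for an explicit rational function. Neither is obviously easier; your $(\ast)$ has the modest advantage of being a single concrete analytic statement, while the paper's formulation keeps the $(u,v)$-symmetry manifest.
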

{\it An argument towards to a proof of Conjecture \ref{con}:}
 Note that the critical values mentioned in Theorem \ref{tkk} coincide with values given in
 Theorem \ref{t<} and Theorem \ref{tk3} for $k=2, 3$ respectively, in these theorems the number of TISGMs
is exactly one or three, i.e. Conjecture is true for $k=2,3$.  The following argument shows that the Conjecture should be true.
 System (\ref{rus3.5}) can be written as linear system of equations with respect to $a$ and $T=a\theta$:
\begin{equation}\label{r5} \left\{\begin{array}{ll}
(1+ u^k)a+v^k T=u(1+u^k+v^k),\\[3 mm]
(1+ v^k)a+u^k T=v(1+u^k+v^k).
\end{array}\right.
\end{equation}
The solution of it after dividing to $u-v$ is
$$a=a(u,v)={\sum_{i=0}^ku^{k-j}v^j\over \sum_{i=0}^{k-1}u^{k-1-j}v^j}, $$
$$T=T(u,v)={uv\left(\sum_{i=0}^{k-2}u^{k-2-j}v^j\right)-1\over \sum_{i=0}^{k-1}u^{k-1-j}v^j}. $$
Now take $a$ fixed and consider the Lagrange multiplier method to find minimal value of $T(u,v)$. Then we should solve
\begin{equation}\label{Lm}\left\{\begin{array}{lll}
{\partial T(u,v)\over \partial u}=\ell {\partial a(u,v)\over \partial u},\\[2mm]
{\partial T(u,v)\over \partial v}=\ell {\partial a(u,v)\over \partial v},\\[2mm]
a=a(u,v)\end{array}.\right.
\end{equation}
This is complicated to solve, but by symmetry of $u,v$ the first and second equation should have a solution $u=v$.
Using $u=v$ in the third equation, i.e. $a=a(u,u)$ we get $u={ak\over k+1}$.
Then taking $u=v={ak\over k+1}$ from $T=T(u,v)$ we find
$$a=\sqrt[k]{\lambda}={k+1\over k}\cdot {1\over \sqrt[k]{k-1-\theta(k+1)}}.$$
Thus we get exactly critical values of $\theta$ and $\lambda$.
But the only \textbf{remaining problem is} to show that function $a$ has its constrained
minimum on the solution of the system (\ref{Lm}) with $u=v$.
Numerical analysis by ``Mathematica" showed that the last statement is true for small values of $k=4, \dots, 15.$ \\

\begin{rk} \textcolor{white}{white}
	\begin{itemize}
\item[1.] We have
$$\theta'_{\rm c}(k)<\theta_{\rm c}(k), \ \ \ \lambda_{\rm cr}'(k)<\lambda_{\rm cr}(k),$$
$$\lim_{k\to \infty}\theta_{\rm c}(k)-\theta'_{\rm c}(k)=0, \ \ \ \lim_{k\to\infty}\lambda_{\rm cr}(k)-\lambda'_{\rm cr}(k)=0.$$

\item[2.] For the mean-field version of the Soft-Core model the behavior is similar but there is no critical value of $\lambda$. It is proven that no phase transition occurs for $\theta  \leq \theta_c(\lambda) := \exp(-2-\frac{e}{\lambda})$ and if $\theta>\theta_c(\lambda)$ then there exist multiple Gibbs measures (see \cite{KK1}).
\end{itemize}
\end{rk}

\section{Lower and upper bounds of solutions to the functional equation (\ref{eq: comp})}

Introduce  a new function
$$F(x,y,\theta):={1+x+\theta y\over 1+x+y}.$$

Rewrite the system of equations  (\ref{eq: comp}) in the
following form:
\begin{equation}\label{rus2.3}\begin{array}{llllll}
z_{1,i}=\lambda \prod_{j\in S(i)}F(z_{1,j},z_{2,j},\theta),\\[4mm]
z_{2,i}=\lambda \prod_{j\in S(i)}F(z_{2,j},z_{1,j},\theta),
\end{array}
\end{equation}
 where
$$z_{1,i}=\exp(\tilde h_{+,i}), \ \  z_{2,i}=\exp(\tilde h_{-,i}).$$

\begin{pro}\label{rusp1} Let $\theta<1$. If $(z_{1,i}, z_{2,i})$ is a solution
of (\ref{rus2.3}) then
$$z^-_j\leq z_{j,i} \leq z^+_j,$$
for any $j=1, 2, \ \ x\in \mathbb T^k,$ where $(z^-_1, z^+_1, z^-_2,
z^+_2)$ is a solution of
\begin{equation}\label{rus3.7}
\left\{\begin{array}{llll}
z^-_1=\lambda (F(z^-_{1},z^+_{2},\theta))^k,\\[2mm]
z^+_1=\lambda(F(z^+_{1},z^-_{2},\theta))^k,\\[2mm]
z^-_2=\lambda(F(z^-_{2},z^+_{1},\theta))^k,\\[2mm]
z^+_2=\lambda(F(z^+_{2},z^-_{1},\theta))^k.
\end{array}\right.
\end{equation}

\end{pro}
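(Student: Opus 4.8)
The plan is to use a monotonicity argument on the lattice $\mathbb{T}^k$, comparing an arbitrary solution $(z_{1,i},z_{2,i})$ of the full system \eqref{rus2.3} against the constant solution $(z_1^-,z_1^+,z_2^-,z_2^+)$ of the closed finite system \eqref{rus3.7}. The first step is to record the monotonicity properties of $F$: for $\theta<1$, the map $x\mapsto F(x,y,\theta)=\frac{1+x+\theta y}{1+x+y}$ is increasing in $x$ (its derivative in $x$ has numerator $(1-\theta)y\ge 0$), and $y\mapsto F(x,y,\theta)$ is decreasing in $y$ (derivative numerator $-(1-\theta)(1+x)<0$). Thus the right-hand side of the first equation of \eqref{rus2.3} is increasing in each $z_{1,j}$ and decreasing in each $z_{2,j}$, and symmetrically for the second; this is the structure that makes the coupled upper/lower iteration work. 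I would also note that any solution of \eqref{rus2.3} is bounded: since $\theta<F(x,y,\theta)\le 1$ for $x,y\ge 0$, every solution satisfies $\lambda\theta^k\le z_{j,i}\le\lambda$, so the quantities $m_j:=\inf_i z_{j,i}$ and $M_j:=\sup_i z_{j,i}$ are finite and positive.

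The main step is to pass the infima and suprema through the recursion. Fix $j=1$ (the case $j=2$ is symmetric). For any $i$ and any $j'\in S(i)$ we have $z_{1,j'}\le M_1$ and $z_{2,j'}\ge m_2$, so by the monotonicity of $F$,
\[
z_{1,i}=\lambda\prod_{j'\in S(i)}F(z_{1,j'},z_{2,j'},\theta)\le\lambda\bigl(F(M_1,m_2,\theta)\bigr)^k,
\]
and taking the supremum over $i$ gives $M_1\le\lambda\bigl(F(M_1,m_2,\theta)\bigr)^k$; likewise $m_1\ge\lambda\bigl(F(m_1,M_2,\theta)\bigr)^k$, and the two analogous inequalities for $M_2,m_2$. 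So $(m_1,M_1,m_2,M_2)$ satisfies \eqref{rus3.7} with the equalities relaxed to the "natural" inequalities ($M_1\le\cdots$, $m_1\ge\cdots$). It then remains to show that the genuine solution $(z_1^-,z_1^+,z_2^-,z_2^+)$ of \eqref{rus3.7} sandwiches this quadruple, i.e. $z_1^-\le m_1$, $M_1\le z_1^+$, and similarly for the index $2$. This is where I would invoke the earlier development: equations \eqref{rus3.7} are, up to relabeling, exactly the finite fixed-point system analyzed in Section 4 (with $F(x,y,\theta)=\bigl(\text{the base in }\eqref{exy}\bigr)$), and one knows its solution set; alternatively one argues directly that the monotone operator $(z_1^-,z_1^+,z_2^-,z_2^+)\mapsto\bigl(\lambda F(z_1^-,z_2^+)^k,\lambda F(z_1^+,z_2^-)^k,\dots\bigr)$ — which is monotone increasing once we order the $\pm$ coordinates appropriately — has the property that the maximal solution dominates, and the minimal one is dominated by, any quadruple satisfying the one-sided inequalities above. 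Running the iteration from the constant bounds $\lambda\theta^k$ and $\lambda$ produces decreasing/increasing sequences converging to $z_1^\pm,z_2^\pm$ that dominate/are dominated by $M_1,m_1$ at every stage, which closes the argument.

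The main obstacle is the last comparison step: one must be careful that \eqref{rus3.7} can have several solutions (this is precisely the non-uniqueness phenomenon of the earlier sections), so the statement should be read — and should be proved — with $(z_1^-,z_1^+,z_2^-,z_2^+)$ taken to be the extremal solution, namely the one with $z_1^-,z_2^-$ minimal and $z_1^+,z_2^+$ maximal. The substantive content is that this extremal solution exists and is reached by the monotone iteration from the a priori bounds $[\lambda\theta^k,\lambda]$; this follows from Tarski's fixed-point theorem applied to the coordinatewise-monotone operator on the product of intervals, or by a direct Knaster–Tarski-style monotone-sequence argument, using only the sign of the partial derivatives of $F$ established in the first step. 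Everything else is a routine propagation of inequalities through the tree recursion.
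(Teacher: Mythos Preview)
Your argument is essentially the paper's own: the paper also starts from the a~priori bounds $\lambda\theta^k<z_{j,i}<\lambda$, uses the monotonicity of $F$ in each variable to iterate these into nested sequences $z^{-}_{j,n}<z_{j,i}<z^{+}_{j,n}$, and then passes to the limit to obtain the particular solution $(z^{-}_1,z^{+}_1,z^{-}_2,z^{+}_2)$ of \eqref{rus3.7}; your detour through $m_j=\inf_i z_{j,i}$ and $M_j=\sup_i z_{j,i}$ is a harmless repackaging of the same iteration. Your closing remark that the bounding quadruple must be the \emph{extremal} solution of \eqref{rus3.7} (the one reached by the monotone iteration from $[\lambda\theta^k,\lambda]$) is a correct clarification that the paper leaves implicit in its construction.
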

\begin{proof} First we note that $z_{j,i}>0$, $j=0,1$ and for the function $F$ if $x>0$ and $y>0$ then
$$\theta< F(x,y,\theta)<1.$$
Consequently, from (\ref{rus2.3}) we get
\begin{equation}\label{n11}
\begin{array}{ll}
z^-_{1,1}:=\lambda \theta^k<z_{1,i}<\lambda=: z^+_{1,1}.\\[2mm]
z^-_{2,1}:=\lambda \theta^k<z_{2,i}<\lambda=: z^+_{2,1}.
\end{array}
\end{equation}

It is easy to see that the function $F$, for $\theta<1$, is monotone increasing
with respect to $x$, but monotone decreasing with respect to $y$. Using this property, and
the bounds (\ref{n11}) we get from (\ref{rus2.3}) that
\begin{equation}\label{n22}
\begin{array}{ll}
z^-_{1,2}:=\lambda (F(z^-_{1,1},z^+_{2,1},\theta))^k<z_{1,i}<\lambda (F(z^+_{1,1},z^-_{2,1},\theta))^k=: z^+_{1,2}.\\[2mm]
z^-_{2,2}:=\lambda (F(z^-_{2,1},z^+_{1,1},\theta))^k<z_{2,i}<\lambda (F(z^+_{2,1},z^-_{1,1},\theta))^k=: z^+_{2,2}.
\end{array}
\end{equation}
Now iterating this argument we get
\begin{equation}\label{nnn}
\begin{array}{ll}
z^-_{1,n+1}:=\lambda (F(z^-_{1,n},z^+_{2,n},\theta))^k<z_{1,i}<\lambda (F(z^+_{1,n},z^-_{2,n},\theta))^k=: z^+_{1,n+1}.\\[2mm]
z^-_{2,n+1}:=\lambda (F(z^-_{2,n},z^+_{1,n},\theta))^k<z_{2,i}<\lambda (F(z^+_{2,n},z^-_{1,n},\theta))^k=: z^+_{2,n+1}.
\end{array}
\end{equation}
One can see that $z^-_{i,n}$,  (resp. $z^+_{i,n}$), $i = 1, 2$ are increasing (decreasing) and bounded sequences. Thus there exist
$$\lim_{n\to\infty}z^{\pm}_{i,n}=z^{\pm}_i, \ \ i=1,2.$$
\end{proof}

As in the case wrench (see \cite{Br1}) we can prove the following statements:

\begin{pro}\label{rusp2} If $z=(z_1^-, z^+_1, z^-_2, z^+_2)$ a
solution of (\ref{rus3.7}) then $z^-_1=z^+_1$ iff $z^-_2=z^+_2$.
\end{pro}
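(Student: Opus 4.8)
The plan is to read the equivalence off directly from the two pairs of equations in (\ref{rus3.7}), using only that $F$ is strictly monotone in its second argument. First I would record the two elementary facts about $F(x,y,\theta)=\frac{1+x+\theta y}{1+x+y}$ for $\theta<1$ and $x,y>0$: it is strictly positive (in fact $\theta<F<1$, as already noted in the proof of Proposition \ref{rusp1}), and it is strictly decreasing in $y$, because $\partial_y F(x,y,\theta)=-\frac{(1-\theta)(1+x)}{(1+x+y)^2}<0$. I would also note at the outset that the components $z_i^{\pm}$ of the solution are positive — this is the only point needing care, so that $F$ is well defined on them and the monotonicity applies; for the solution furnished by Proposition \ref{rusp1} it follows at once by letting $n\to\infty$ in the bounds $\lambda\theta^k\le z_{i,n}^{\pm}\le\lambda$ of (\ref{n11}).

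Now suppose $z_1^-=z_1^+=:z_1$. The first two equations of (\ref{rus3.7}) become $z_1=\lambda\big(F(z_1,z_2^+,\theta)\big)^k$ and $z_1=\lambda\big(F(z_1,z_2^-,\theta)\big)^k$. Since $u\mapsto\lambda u^k$ is injective on $(0,\infty)$ and the two numbers $F(z_1,z_2^{\pm},\theta)$ are positive, this yields $F(z_1,z_2^+,\theta)=F(z_1,z_2^-,\theta)$, and strict monotonicity of $F$ in its second variable forces $z_2^+=z_2^-$.

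For the converse one argues identically from the last two equations of (\ref{rus3.7}); alternatively, observe that the system (\ref{rus3.7}) is invariant under the interchange of the pairs $(z_1^-,z_1^+)$ and $(z_2^-,z_2^+)$, which permutes its four equations among themselves, so the implication $z_2^-=z_2^+\Rightarrow z_1^-=z_1^+$ is the same statement. I do not expect any genuine obstacle: beyond the positivity remark above, the argument is just injectivity of $u\mapsto\lambda u^k$ combined with the strict monotonicity of $F(x,\cdot,\theta)$.
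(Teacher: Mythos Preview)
Your argument is correct and follows exactly the same route as the paper: equate the first two equations of (\ref{rus3.7}) when $z_1^-=z_1^+$ to obtain $F(z_1,z_2^+,\theta)=F(z_1,z_2^-,\theta)$ and conclude $z_2^-=z_2^+$ by strict monotonicity of $F$ in the second variable, then argue symmetrically from the last two equations for the converse. The only difference is that you spell out the auxiliary points (positivity of the components, injectivity of $u\mapsto\lambda u^k$ on $(0,\infty)$) that the paper leaves implicit.
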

\begin{proof} Assume $z^-_1=z^+_1$ then from the first and second equations of
(\ref{rus3.7}) we get $F(z^-_{1},z^+_{2},\theta)=F(z^-_{1},z^-_{2},\theta)$, consequently $z^-_2=z^+_2$.
If now $z^-_2=z^+_2$ then from the third and fourth equations we get $F(z^-_{2},z^-_{1},\theta)=F(z^-_{2},z^+_{1},\theta)$,
i.e. $z^-_1=z^+_1$.
\end{proof}
This proposition is very useful:
\begin{cor}\label{rusc1}
 If the system (\ref{rus3.7}) has a unique solution then
system (\ref{rus2.3}) also has a unique solution. Moreover, this solution
is the unique solution of (\ref{exy}).
\end{cor}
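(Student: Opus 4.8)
The plan is to derive the statement from the squeeze in Proposition~\ref{rusp1}, together with Proposition~\ref{rusp2} and the evident symmetry of the auxiliary system (\ref{rus3.7}). First I would record that the involution $(z_1^-,z_1^+,z_2^-,z_2^+)\mapsto(z_1^+,z_1^-,z_2^+,z_2^-)$ maps solutions of (\ref{rus3.7}) to solutions of (\ref{rus3.7}), since it only interchanges the first equation with the second and the third with the fourth. Hence, if (\ref{rus3.7}) has a unique solution, that solution is a fixed point of the involution, i.e. $z_1^-=z_1^+=:z_1$ and $z_2^-=z_2^+=:z_2$. (Equivalently, since $\theta<1$ here we are in the ferromagnetic regime, so by Proposition~\ref{p<} the diagonal fixed point $(x^*,x^*)$ of (\ref{exy}) provides the solution $(x^*,x^*,x^*,x^*)$ of (\ref{rus3.7}), which by uniqueness must be the one in question.)

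Next, with $z_1$ and $z_2$ as above, I would invoke Proposition~\ref{rusp1}: any solution $(z_{1,i},z_{2,i})_{i\in\mathbb T^k}$ of (\ref{rus2.3}) satisfies $z_1=z_1^-\le z_{1,i}\le z_1^+=z_1$ and $z_2=z_2^-\le z_{2,i}\le z_2^+=z_2$ for every $i$, which forces $z_{1,i}\equiv z_1$ and $z_{2,i}\equiv z_2$. So (\ref{rus2.3}) has at most one solution. It has one as well: the constant configuration $z_{1,i}\equiv z_1$, $z_{2,i}\equiv z_2$ solves (\ref{rus2.3}), because the first equation of (\ref{rus3.7}) with $z_1^\pm=z_1$, $z_2^\pm=z_2$ reads $z_1=\lambda F(z_1,z_2,\theta)^k=\lambda\prod_{j\in S(i)}F(z_1,z_2,\theta)$, and likewise for the second coordinate. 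Thus (\ref{rus2.3}) has exactly one solution.

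Finally I would observe that, under the dictionary $x=z_{1,i}=e^{\tilde h_{+,i}}$, $y=z_{2,i}=e^{\tilde h_{-,i}}$, a constant solution of (\ref{rus2.3}) is precisely a solution $(x,y)$ of (\ref{exy}): for a configuration constant in $i$ the product over $S(i)$ becomes a $k$-th power and (\ref{rus2.3}) collapses to (\ref{exy}). Since every solution of (\ref{exy}) thus yields a constant---hence, by the previous step, the unique---solution of (\ref{rus2.3}), the system (\ref{exy}) has the single solution $(z_1,z_2)$, which is the asserted ``moreover'' part. The argument is essentially bookkeeping; the only place requiring a little care is the first step, since Proposition~\ref{rusp2} by itself only gives the equivalence $z_1^-=z_1^+\Leftrightarrow z_2^-=z_2^+$ and does not put the unique solution on the diagonal---for that one genuinely needs the symmetry of (\ref{rus3.7}) (or the diagonal fixed point of (\ref{exy})).
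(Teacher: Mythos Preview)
Your argument is correct and matches the paper's intended route: the paper states the corollary without proof, immediately after Propositions~\ref{rusp1} and~\ref{rusp2}, so the implicit reasoning is precisely the squeeze from Proposition~\ref{rusp1} together with the observation that the unique solution of (\ref{rus3.7}) must lie on the diagonal. Your explicit involution argument (or equivalently the existence of the diagonal solution $(x^*,x^*,x^*,x^*)$ from Proposition~\ref{p<}) fills a small gap the paper leaves tacit, since, as you note, Proposition~\ref{rusp2} alone does not force the unique solution onto the diagonal.
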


Now we shall find exact values of $z^-_i, z^+_i, i=1,2$ for $k=2.$

Consider the system consisting of the first and last equations
in system (\ref{rus3.7}):
\begin{equation}\label{ru}
\left\{\begin{array}{llll}
z^-_1=\lambda (F(z^-_{1},z^+_{2},\theta))^k,\\[2mm]
z^+_2=\lambda(F(z^+_{2},z^-_{1},\theta))^k.
\end{array}\right.
\end{equation}
Taking $x=z^-_1$ and $y=z^+_2$ one can see that this system coincides with the system (\ref{exy}).
Therefore for $k=2$ we have $(z^-_1, z^+_2)\in \mathcal M$ (see (\ref{ye})).

Similarly,
from the second and third equalities of (\ref{rus3.7}) we get
$(z^+_1, \, z^-_2) \in \mathcal M$.
Thus, we have the following

\begin{pro}\label{rusp3} If $k=2$, $\theta<1$ then
\begin{itemize}

\item[1)] for $\theta\geq 1/3$ or $\theta<1/3$ and $\lambda\leq {9\over 4(1-3\theta)}$ the system (\ref{rus3.7}) has unique
solution $(x^*,x^*,x^*,x^*)$;

\item[2)]  for $\theta<1/3$ and $\lambda> {9\over 4(1-3\theta)}$, the system (\ref{rus3.7}) has four solutions, (as vector $(z^-_1, z^+_1, z^-_2, z^+_2)$):
$$(x_1^*,x_1^*,x_2^*,x_2^*),\ \ (x_1^*,x_2^*,x_1^*,x_2^*), \ \ (x_2^*,x_1^*,x_2^*,x_1^*), \ \ (x_2^*,x_2^*,x_1^*,x_1^*) $$ where coordinates are given
in $\mathcal M$.
\end{itemize}
\end{pro}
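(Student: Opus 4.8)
The plan is to reduce the four-equation system (\ref{rus3.7}) to the two-variable system (\ref{exy}) already analyzed in Theorem \ref{t<}, and then use Proposition \ref{rusp2} to rule out the spurious mixed possibilities. First I would observe, exactly as in the paragraph preceding the statement, that the pair $(z^-_1,z^+_2)$ satisfies the subsystem (\ref{ru}), which for $k=2$ is literally the system (\ref{exy}) with $x=z^-_1$, $y=z^+_2$; likewise $(z^+_1,z^-_2)$ satisfies (\ref{exy}). Hence both pairs lie in the solution set $\mathcal M$ of (\ref{exy}) described in (\ref{ye}), namely $\{(x^*,x^*),(x^*_1,x^*_2),(x^*_2,x^*_1)\}$, where existence/multiplicity is governed by Theorem \ref{t<}: if $\theta\ge 1/3$, or $\theta<1/3$ and $\lambda\le \tfrac{9}{4(1-3\theta)}$, then $\mathcal M=\{(x^*,x^*)\}$; otherwise $\mathcal M$ has the three listed elements (note $\lambda_{\rm cr}(2)=\tfrac{1}{1-3\theta}(3/2)^2=\tfrac{9}{4(1-3\theta)}$, so the thresholds in the proposition match those of Theorem \ref{t<}).

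In the uniqueness regime of part 1), $\mathcal M=\{(x^*,x^*)\}$ forces $z^-_1=z^+_2=x^*$ and $z^+_1=z^-_2=x^*$, so $(z^-_1,z^+_1,z^-_2,z^+_2)=(x^*,x^*,x^*,x^*)$, which is visibly a solution; this proves 1). For part 2), with $\theta<1/3$ and $\lambda>\tfrac{9}{4(1-3\theta)}$, each of the two pairs $(z^-_1,z^+_2)$ and $(z^+_1,z^-_2)$ must be one of the three elements of $\mathcal M$. I would now bring in the constraints $z^-_1\le z^+_1$ and $z^-_2\le z^+_2$ (which hold because these are the limits of the monotone bounding sequences in Proposition \ref{rusp1}), together with the extra rigidity from Proposition \ref{rusp2}: $z^-_1=z^+_1$ if and only if $z^-_2=z^+_2$. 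The case $z^-_1=z^+_1$ (equivalently $z^-_2=z^+_2$) would make all four coordinates equal, hence each pair equals $(x^*,x^*)\in\mathcal M$; but then $x^*$ would be a solution of (\ref{exy}) on the diagonal, which is the solution of Proposition \ref{p<}, and this is compatible — so I must check whether the all-equal solution persists here. In fact the diagonal solution $(x^*,x^*)$ of Proposition \ref{p<} still exists for all $\lambda$, so $(x^*,x^*,x^*,x^*)$ is again a solution; I would include it or, more carefully, note that in the multiplicity regime $x^*$ coincides with neither $x^*_1$ nor $x^*_2$, and the count of four refers to the solutions listed — I would double-check against the symmetry $(z^-_1,z^+_1,z^-_2,z^+_2)\mapsto(z^+_1,z^-_1,z^+_2,z^-_2)$ coming from Lemma \ref{le}, which swaps the first with the third listed vector and the second with the fourth.

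So the remaining case is $z^-_1<z^+_1$ and $z^-_2<z^+_2$. Then $(z^-_1,z^+_2)$ is one of $(x^*,x^*)$, $(x^*_1,x^*_2)$, $(x^*_2,x^*_1)$, and likewise $(z^+_1,z^-_2)$; the inequalities $z^-_1<z^+_1$, $z^-_2<z^+_2$, combined with $x^*_1<x^*_2$ and $x^*=y^*$, $x^*_1=y^*_2$, $x^*_2=y^*_1$, cut down the admissible combinations. Enumerating: if $(z^-_1,z^+_2)=(x^*_1,x^*_2)$ then $z^-_1=x^*_1$, and for $z^+_1>z^-_1=x^*_1$ with $(z^+_1,z^-_2)\in\mathcal M$ one needs $z^+_1\in\{x^*_2\}$ (taking $x^*$ is excluded since strict inequalities force the off-diagonal pair), giving $z^-_2=x^*_1$, so the vector $(x^*_1,x^*_2,x^*_1,x^*_2)$; symmetrically $(z^-_1,z^+_2)=(x^*_2,x^*_1)$ is incompatible with $z^-_1<z^+_1$ unless we instead pair the diagonal choices, producing $(x^*_2,x^*_1,x^*_2,x^*_1)$, and the two choices where one pair is diagonal and the other off-diagonal give $(x^*_1,x^*_1,x^*_2,x^*_2)$ and $(x^*_2,x^*_2,x^*_1,x^*_1)$ after using $x^*_1=y^*_2$, $x^*_2=y^*_1$ to match the subsystem solutions. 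This yields exactly the four vectors in the statement, and one verifies directly that each of them indeed solves (\ref{rus3.7}) by plugging into the four equations and using that each constituent pair lies in $\mathcal M$.

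The main obstacle is the bookkeeping in the last paragraph: one must argue carefully that no \emph{further} combinations of elements of $\mathcal M\times\mathcal M$ give admissible solutions of (\ref{rus3.7}), i.e. that the ordering constraints $z^-_j\le z^+_j$ together with the coupling in Proposition \ref{rusp2} and the identities $x^*_1=y^*_2$, $x^*_2=y^*_1$ leave precisely these four, and to reconcile the role of the always-present diagonal solution $(x^*,x^*,x^*,x^*)$ with the claimed count of four — presumably the statement implicitly restricts to the solutions relevant for the extremal bounds, and I would add a sentence clarifying this.
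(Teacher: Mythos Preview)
Your basic approach is exactly the paper's: the paragraph preceding the proposition observes that equations~1 and~4 of (\ref{rus3.7}) form one copy of (\ref{exy}) in the variables $(z_1^-,z_2^+)$, while equations~2 and~3 form an \emph{independent} copy in $(z_1^+,z_2^-)$, so every solution of (\ref{rus3.7}) is a pair of elements of $\mathcal M$. That is the whole argument; the paper gives no further enumeration.

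Where you go astray is in importing the ordering constraints $z_j^-\le z_j^+$ and Proposition~\ref{rusp2} into the proof. These do \emph{not} belong here: in the statement of (\ref{rus3.7}) the superscripts $\pm$ are merely labels for four unknowns, and the proposition is about \emph{all} solutions of this algebraic system, not only the particular one arising as the limit of the monotone sequences in Proposition~\ref{rusp1}. Since the system decouples completely, the solution set is literally $\mathcal M\times\mathcal M$ (with $(z_1^-,z_2^+)$ from the first factor and $(z_1^+,z_2^-)$ from the second), and no compatibility or ordering check between the two pairs is needed. Your case-by-case enumeration is therefore unnecessary, and the places where it becomes muddled (``incompatible with $z_1^-<z_1^+$ unless\dots'') are symptoms of applying a constraint that is not there.

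Your closing worry is, however, entirely justified. In the regime of part~2) one has $|\mathcal M|=3$, so the decoupled system has $3\times 3=9$ solutions, not four: the diagonal $(x^*,x^*,x^*,x^*)$ and the four mixed vectors $(x^*,x_i^*,x_j^*,x^*)$, $(x_i^*,x^*,x^*,x_j^*)$ are omitted from the list. The four displayed vectors are precisely the $2\times 2$ combinations built from the symmetry-breaking elements $(x_1^*,x_2^*),(x_2^*,x_1^*)$ of $\mathcal M$ alone; the paper's ``four solutions'' should be read in that restricted sense (these are the ones relevant for Corollary~\ref{rusc2}), and you are right to flag it.
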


\begin{cor}\label{rusc2} If $k=2$, $\theta<1/3$ and  $\lambda>{9\over 4(1-3\theta)}$ then for any
solution $(z_{1,i}, z_{2,i})$ of (\ref{rus2.3}) we have
$$x^*_1\leq z_{j,i}\leq x_2^*, \ \ j=1,2, \ \ \forall i\in \mathbb T^2.$$
\end{cor}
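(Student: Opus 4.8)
The plan is to combine Proposition~\ref{rusp1} with Proposition~\ref{rusp3}; the only real work will be to pin down \emph{which} of the four solutions of (\ref{rus3.7}) is the quadruple $(z^-_1,z^+_1,z^-_2,z^+_2)$ furnished by Proposition~\ref{rusp1}. Once that quadruple is identified as $(x_1^*,x_2^*,x_1^*,x_2^*)$, the two-sided bound $z^-_j\le z_{j,i}\le z^+_j$ of Proposition~\ref{rusp1} is exactly the assertion of the corollary.

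First I would recall that Proposition~\ref{rusp1} produces $(z^-_1,z^+_1,z^-_2,z^+_2)$ as the common limit of the monotone sequences $z^\pm_{i,n}$ built in its proof, with $z^-_{i,1}=\lambda\theta^2$ and $z^+_{i,1}=\lambda$, and that passing to the limit in (\ref{nnn}) (using continuity of $F$) this quadruple solves (\ref{rus3.7}). Next I would record two elementary features of this specific solution. (i) \emph{Index symmetry:} the recursion (\ref{nnn}) and the initial data (\ref{n11}) are identical for $i=1$ and $i=2$, hence $z^-_{1,n}=z^-_{2,n}$ and $z^+_{1,n}=z^+_{2,n}$ for every $n$, so in the limit $z^-_1=z^-_2$ and $z^+_1=z^+_2$. (ii) \emph{Ordering:} since $\theta<1$ gives $z^-_{i,1}=\lambda\theta^2<\lambda=z^+_{i,1}$ and, as shown in the proof of Proposition~\ref{rusp1}, $z^-_{j,n}<z_{j,i}<z^+_{j,n}$ for all $n$, we get $z^-_j\le z^+_j$.

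Finally I would apply Proposition~\ref{rusp3}: in the regime $k=2$, $\theta<1/3$, $\lambda>\frac{9}{4(1-3\theta)}$, the system (\ref{rus3.7}) has exactly the four solutions listed there, with $x_1^*<x_2^*$. Among them, $(x_1^*,x_1^*,x_2^*,x_2^*)$ and $(x_2^*,x_2^*,x_1^*,x_1^*)$ violate the index symmetry $z^-_1=z^-_2$ of (i), while $(x_2^*,x_1^*,x_2^*,x_1^*)$ violates the ordering $z^-_1\le z^+_1$ of (ii). Hence $(z^-_1,z^+_1,z^-_2,z^+_2)=(x_1^*,x_2^*,x_1^*,x_2^*)$, i.e. $z^-_1=z^-_2=x_1^*$ and $z^+_1=z^+_2=x_2^*$, and substituting into the bounds of Proposition~\ref{rusp1} yields $x_1^*\le z_{j,i}\le x_2^*$ for $j=1,2$ and every $i\in\mathbb T^2$. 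The only mildly delicate point is the bookkeeping in this last step — checking that properties (i) and (ii) together single out a unique member of the four-element solution set — but since everything else is a direct appeal to the preceding propositions, I do not expect a genuine obstacle.
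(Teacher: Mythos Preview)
Your argument is correct and is exactly the intended one: the paper states Corollary~\ref{rusc2} without proof, as an immediate consequence of Propositions~\ref{rusp1} and~\ref{rusp3}, and you have simply spelled out the identification of $(z^-_1,z^+_1,z^-_2,z^+_2)$ among the four listed solutions. The symmetry observation (i) and the ordering (ii) are precisely the right bookkeeping, and both follow directly from the construction in the proof of Proposition~\ref{rusp1}; there is no gap.
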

\begin{rk} Since we also have explicit
 formulas for solutions of (\ref{rus2.3}) in case $k=3$, one can similarly obtain
 an analogue of Corollary \ref{rusc2} in the case $k=3$.

\end{rk}

It is known that for each $\beta>0$ the set of Gibbs measures form a
non-empty convex compact set in the space of all
probability measures on $\Omega$ endowed with the weak topology
(see, e.g., \cite[Chapter~7]{Ge}). A Gibbs measure is
called extreme if it cannot be expressed as convex combination of other measures.
The crucial observation is that according to \cite[Theorem 12.6]{Ge}, \emph{any
extreme Gibbs measure is a splitting GM}; therefore, the
question of uniqueness of Gibbs measures is reduced to that in
the class of splitting GMs.

For two configurations $\sigma^1, \sigma^2\in\Omega$,
of the WR-model,
we write $\sigma^1\le\sigma^2$ if $\sigma^1(x)\leq\sigma^2(x)$
for all $x\in V$.
This partial order defines the concept of a
monotone increasing (decreasing):
A function $f:\Omega\to \mathbb R$ is said to be
\emph{monotone increasing} if $f(\sigma^{1})\le f(\sigma^{2})$
whenever $\sigma^1\leq\sigma^2$. For two probability measures
$\mu_1$, $\mu_2$ on $\Omega$, we write $\mu_1\leq\mu_2$ if $\int
f\,d{\mu}_1\leq\int f\,d{\mu}_2$ for any monotone increasing
$f$. It is known that the Gibbs measures $\mu^*_1$, $\mu^*_2$, corresponding
to ``extreme'' boundary laws, (in our case $x^*_1$ and $x^*_2$ according
to Corollary \ref{rusc2})  enjoy the following monotonicity property:
$\mu^{*}_1\leq\mu\leq
\mu^{*}_2$ for any Gibbs measure
$\mu$ (not necessarily splitting) see Bibliographical notes of
\cite[Chapter 2]{Ge} for more details.
Thus we have the following

 \begin{thm}
 The splitting Gibbs measures
$\mu^{*}_1$  and  $\mu^{*}_2$, (mentioned in Theorem \ref{t<} and Theorem \ref{tk3}) are extreme.
\end{thm}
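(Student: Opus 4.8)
The plan is to combine two ingredients already assembled in the excerpt: the sandwich bound of Corollary \ref{rusc2} (and its stated $k=3$ analogue), which pins any boundary law between the extremal solutions $x_1^*$ and $x_2^*$, and the FKG-type stochastic domination $\mu_1^*\le\mu\le\mu_2^*$ valid for \emph{all} Gibbs measures of the Widom--Rowlinson model. The point is that extremality is inherited at the two ends of a stochastic order: if every Gibbs measure $\mu$ satisfies $\mu_1^*\le\mu\le\mu_2^*$, then $\mu_1^*$ and $\mu_2^*$ cannot be written as nontrivial convex combinations of other Gibbs measures, hence are extreme.

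First I would recall the setup: by Theorem \ref{t<} (for $k=2$) and Theorem \ref{tk3} (for $k=3$), in the non-uniqueness regime $\theta<\theta_c(k)$, $\lambda>\lambda_{\rm cr}(k)$ there are exactly three TISGMs, and among them the two ``asymmetric'' ones $\mu_1^*,\mu_2^*$ correspond to the boundary laws built from $x_1^*$ and $x_2^*$. These are genuine splitting Gibbs measures by construction (Kolmogorov extension of the compatible finite-volume measures), hence genuine Gibbs measures. Second, I would invoke the monotonicity: the WR-model on the tree admits the partial order $\sigma^1\le\sigma^2$ defined in the excerpt, and the finite-volume Gibbs kernels are monotone in the boundary condition (this is the FKG/attractiveness property of the Widom--Rowlinson interaction with spin values $-1,0,1$, where $\mathbf 1(\omega_i\omega_j=-1)$ is submodular); passing to the infinite-volume limit, the extremal boundary laws $x_1^*$ (all-minus-type limit) and $x_2^*$ (all-plus-type limit) give rise to the minimal and maximal Gibbs measures $\mu_1^*\le\mu\le\mu_2^*$ in this stochastic order, for every Gibbs measure $\mu$. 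This is the content attributed to the Bibliographical notes of \cite[Chapter 2]{Ge}, so I would simply cite it.

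Third, I would run the standard extremality argument. Suppose $\mu_2^*=t\nu_1+(1-t)\nu_2$ with $t\in(0,1)$ and $\nu_1,\nu_2$ Gibbs measures. By the sandwich, $\nu_1\le\mu_2^*$ and $\nu_2\le\mu_2^*$ in the stochastic order; since $\mu_2^*$ is their average and each lies below $\mu_2^*$, we must have $\int f\,d\nu_1=\int f\,d\nu_2=\int f\,d\mu_2^*$ for every bounded monotone increasing $f$. Because bounded monotone increasing functions are measure-determining on $\Omega$ (they generate the Borel $\sigma$-algebra, or one uses that equality in stochastic order forces equality of measures), this yields $\nu_1=\nu_2=\mu_2^*$, so the decomposition is trivial; hence $\mu_2^*$ is extreme. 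The same argument with inequalities reversed gives extremality of $\mu_1^*$. Finally I would note that the hypotheses are exactly those under which $\mu_1^*\ne\mu_2^*$ (namely $\theta<\theta_c(k)$ and $\lambda>\lambda_{\rm cr}(k)$ for $k=2,3$, equivalently the regimes of part 2.b) of Theorems \ref{t<} and \ref{tk3}), so the statement is nonvacuous.

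The main obstacle, such as it is, is bookkeeping rather than mathematics: one must be careful that the monotonicity $\mu_1^*\le\mu\le\mu_2^*$ is quoted in the correct generality — for \emph{all} Gibbs measures, not merely splitting ones — and that the identification of $\mu_1^*,\mu_2^*$ as the extremal boundary-law measures from Corollary \ref{rusc2} is consistent with the identification as stochastically minimal/maximal measures (both come from monotone iteration of the recursion from the constant initial data $\lambda\theta^k$ and $\lambda$, so they agree). Everything else is the textbook fact that the top and bottom of a stochastically ordered family of Gibbs measures are extreme, which I would present in two or three lines as above.
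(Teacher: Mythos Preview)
Your proposal is correct and follows essentially the same approach as the paper: the paper's argument is simply the paragraph preceding the theorem, which invokes Corollary \ref{rusc2} to identify $x_1^*,x_2^*$ as extremal boundary laws, quotes from \cite[Chapter 2]{Ge} the stochastic sandwich $\mu_1^*\le\mu\le\mu_2^*$ for all Gibbs measures $\mu$, and concludes extremality from that. You have spelled out the final step (that a stochastically minimal or maximal Gibbs measure cannot be a nontrivial convex combination) more explicitly than the paper does, but the route is the same.
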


\section{Periodic GMs in ferromagnetic model}
In general periodicity of a splitting Gibbs measure can be defined by the group representation of the
Cayley tree (see \cite{Ro}). In our model there are only periodic measures with period
two (can be shown as Theorems 2.3 and 2.4 in \cite{Ro} and Theorem 2 of \cite{MRS}).
Namely, to construct two-periodic points we separate vertices of the Cayley tree to odd and even ones:
 A vertex is called odd (resp. even) if it is at odd (resp. even) distance from the root $0$.
 Then a two-periodic splitting GM corresponds to a solution $z_i=(z_{1,i}, z_{2,i})$ of (\ref{rus2.3}) having the form
 $$ z_i=\left\{\begin{array}{ll}
 z=(z_1,z_2), \ \ \mbox{if} \ \ i \ \ \mbox{is even}\\[2mm]
 t=(t_1,t_2), \ \ \mbox{if} \ \ i \ \ \mbox{is odd}.
 \end{array}\right.
 $$
 Dividing the tree into even and odd sites is analogous 
 to a checkerboard decomposition of the lattice sites on $\Z^d$. 
   Then from (\ref{rus2.3}) we get the following system of equations:
\begin{equation}\label{rus4.5}
z_1=\lambda\bigg({1+t_1+\theta t_2\over 1+t_1+t_2}\bigg)^k,\ \
z_2=\lambda\bigg({1+t_2+\theta t_1\over 1+t_1+t_2}\bigg)^k,
\end{equation}
$$
t_1=\lambda\bigg({1+z_1+\theta z_2 \over 1+z_1+z_2}\bigg)^k,\ \
t_2=\lambda\bigg({1+z_2+\theta z_1\over 1+z_1+z_2}\bigg)^k.
$$
We solve this system in a simple case: assuming $z_1=z_2=z$, $t_1=t_2=t$ then (\ref{rus4.5}) reduces to the following system
\begin{equation}\label{rus4.6}
z=\lambda\bigg({1+(1+\theta)t\over 1+2t}\bigg)^k,\ \
t=\lambda\bigg({1+(1+\theta)z\over 1+2z}\bigg)^k.
\end{equation}
Denote
$$\phi(x)=\lambda\left({1+(1+\theta)x\over 1+2x}\right)^k.$$
 Then from
(\ref{rus4.6}) we have
\begin{equation}\label{rus4.7}
z=\phi(t),\ \ t=\phi(z).
\end{equation}

As it was shown above the equation $x=\phi(x)$ has unique solution
$x^*=x^*(k,\theta, \l),$ for any $k\geq 1$, $\theta<1$ and $\l>0.$

Now for the equation $\phi(\phi(x)) = x$ we use Lemma \ref{l1}.

Using bounds given in Proposition \ref{rusp1} we can apply the lemma in $[z_1^-, z_2^+]$.
Denote
$$s^{\pm}:=s^{\pm}(k,\theta):={k-3-(k+1)\theta\pm\sqrt{(1-\theta)[k^2-6k+1-(k+1)^2\theta]}\over 4(1+\theta)}.$$
$$\lambda^{\pm}(k,\theta)=s^{\pm}\cdot \left({1+2s^{\pm}\over 1+(1+\theta)s^{\pm}}\right)^k.$$
\begin{thm}\label{rust6} For $k\geq 6$, $\theta<{k^2-6k+1\over (k+1)^2}$ and $\lambda\in (\lambda^{-}(k,\theta), \lambda^{+}(k,\theta))$
there are at least three 2-periodic splitting Gibbs measures $\mu_0,$ $\mu_*,$ $
\mu_1$. These correspond to three solutions $(x_0,x_1),$ $
(x_*,x_*),$ $(x_1,x_0)$ of (\ref{rus4.7}).
\end{thm}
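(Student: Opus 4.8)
\textbf{Proof proposal for Theorem \ref{rust6}.}
The plan is to study the one–dimensional dynamical system $x\mapsto\phi(x)$ with
$\phi(x)=\lambda\left(\tfrac{1+(1+\theta)x}{1+2x}\right)^{k}$ and apply Lemma \ref{l1}
to $\phi$ on the invariant interval $[z_1^-,z_2^+]$ provided by Proposition \ref{rusp1}
(with $\theta<1$, which holds here since $\tfrac{k^2-6k+1}{(k+1)^2}<1$). First I would record
that $\phi$ maps $[z_1^-,z_2^+]$ into itself: this is immediate from the bounds
$\theta<F(x,y,\theta)<1$ used in the proof of Proposition \ref{rusp1}, together with the
monotonicity of $\phi$ (note $\phi$ is increasing in $x$ because $\theta<1$). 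Next I would
invoke the fact, already stated in the excerpt, that $x=\phi(x)$ has a \emph{unique} fixed
point $x^{*}=x^{*}(k,\theta,\lambda)$ for all $k\ge1,\theta<1,\lambda>0$; this $x^{*}$ lies in
the open interval $(z_1^-,z_2^+)$, and it gives the ``central'' two‑periodic solution
$(x_*,x_*)$ and hence $\mu_*$.

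The core of the argument is the computation of $\phi'(x^{*})$ and the identification of the
region where $\phi'(x^{*})<-1$. Differentiating,
$\phi'(x)=k\lambda\left(\tfrac{1+(1+\theta)x}{1+2x}\right)^{k-1}\cdot\tfrac{(1+\theta)(1+2x)-2(1+(1+\theta)x)}{(1+2x)^2}
=k\lambda\left(\tfrac{1+(1+\theta)x}{1+2x}\right)^{k-1}\cdot\tfrac{\theta-1}{(1+2x)^2}$,
which is negative. Using the fixed‑point relation $x^{*}=\lambda\left(\tfrac{1+(1+\theta)x^{*}}{1+2x^{*}}\right)^{k}$
to eliminate $\lambda$, one gets $\phi'(x^{*})=\tfrac{k(\theta-1)x^{*}}{(1+2x^{*})(1+(1+\theta)x^{*})}$,
and the condition $\phi'(x^{*})<-1$ becomes a quadratic inequality in $x^{*}$, namely
$(1+2x^{*})(1+(1+\theta)x^{*})-k(1-\theta)x^{*}<0$, i.e.
$2(1+\theta)(x^{*})^{2}+\bigl(3+\theta-k(1-\theta)\bigr)x^{*}+1<0$. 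This quadratic in $x^{*}$
has positive roots exactly when $3+\theta-k(1-\theta)<0$ and the discriminant
$(k-3-(k+1)\theta)^2-8(1+\theta)=(1-\theta)\bigl(k^2-6k+1-(k+1)^2\theta\bigr)$ is positive,
which forces $\theta<\tfrac{k^2-6k+1}{(k+1)^2}$ (and this requires $k\ge6$ for the numerator to be
positive). The two roots are precisely $s^{-}<s^{+}$ as defined before the theorem, so
$\phi'(x^{*})<-1$ iff $x^{*}\in(s^{-},s^{+})$.

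Finally I would translate the condition $x^{*}\in(s^{-},s^{+})$ back into a condition on $\lambda$.
Since $\lambda=x^{*}\left(\tfrac{1+2x^{*}}{1+(1+\theta)x^{*}}\right)^{k}=:\Lambda(x^{*})$, and one checks
$\Lambda$ is strictly increasing in $x^{*}$ (its logarithmic derivative equals
$\tfrac1{x^{*}}+\tfrac{2}{1+2x^{*}}-\tfrac{k(1+\theta)}{1+(1+\theta)x^{*}}$... here one actually wants the
monotonicity near the relevant range; alternatively argue that $x^{*}$ is a continuous increasing
function of $\lambda$ since $\phi$ is increasing in $\lambda$), the image of $(s^{-},s^{+})$ under
$\Lambda$ is the interval $(\lambda^{-}(k,\theta),\lambda^{+}(k,\theta))$ with
$\lambda^{\pm}=s^{\pm}\left(\tfrac{1+2s^{\pm}}{1+(1+\theta)s^{\pm}}\right)^{k}$ exactly as defined.
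Hence for $\lambda\in(\lambda^{-},\lambda^{+})$ we have $\phi'(x^{*})<-1$, and Lemma \ref{l1}
applied to $\phi:[z_1^-,z_2^+]\to[z_1^-,z_2^+]$ with fixed point $x^{*}$ yields points
$x_0<x^{*}<x_1$ with $\phi(x_0)=x_1$ and $\phi(x_1)=x_0$. Together with $(x_*,x_*)=(x^{*},x^{*})$
these are three distinct solutions $(x_0,x_1),(x_*,x_*),(x_1,x_0)$ of \eqref{rus4.7}, giving three
distinct 2‑periodic splitting Gibbs measures $\mu_0,\mu_*,\mu_1$. I expect the main obstacle to be
the bookkeeping in the last step: verifying carefully that $\Lambda(x^{*})$ (equivalently, that
$x^{*}$ as a function of $\lambda$) is monotone on the relevant range so that the $\lambda$‑interval
is exactly $(\lambda^{-},\lambda^{+})$ and not merely contained in it, and checking that $s^{\pm}$
indeed lie inside $(z_1^-,z_2^+)$ so that Lemma \ref{l1} is applicable; the differentiation and the
quadratic‑inequality manipulations are routine.
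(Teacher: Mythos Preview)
Your approach is essentially identical to the paper's: reduce to the one-dimensional map $\phi$, show $\phi'(x^*)<-1$ is equivalent to the quadratic inequality $2(1+\theta)(x^*)^2+(3+\theta-k(1-\theta))x^*+1<0$ with roots $s^\pm$, and then translate $x^*\in(s^-,s^+)$ into $\lambda\in(\lambda^-,\lambda^+)$ via the fixed-point relation $\lambda=\kappa(x^*)=x^*\bigl(\tfrac{1+2x^*}{1+(1+\theta)x^*}\bigr)^k$.

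Two small corrections. First, you assert early on that ``$\phi$ is increasing in $x$ because $\theta<1$'', which contradicts your own (correct) later computation $\phi'(x)=k\lambda(\cdots)^{k-1}\cdot\tfrac{\theta-1}{(1+2x)^2}<0$. The map $\phi$ is \emph{decreasing}; the self-mapping property $\phi([a,b])\subset[a,b]$ follows directly from $\theta<F<1$, which gives $\phi(x)\in(\lambda\theta^k,\lambda)$ for every $x>0$ without any monotonicity in $x$. Second, regarding what you flag as the ``main obstacle'': the paper dispatches the monotonicity of $\kappa$ by a one-line derivative computation,
\[
\kappa'(x)=\frac{(1+2x)^{k-1}}{(1+(1+\theta)x)^{k+1}}\bigl[(1+2x)(1+(1+\theta)x)+(1-\theta)kx\bigr]>0,
\]
so the image of $(s^-,s^+)$ is exactly $(\lambda^-,\lambda^+)$. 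Your additional worry about whether $s^\pm$ lie in $(z_1^-,z_2^+)$ is unnecessary: Lemma~\ref{l1} only requires the \emph{fixed point} $x^*$ to lie in the open interval, and that is automatic once $\phi$ maps the closed interval into itself.
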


\begin{rk} The densities to see holes on the even (respectively odd) sites are strictly 
different, as a simple computation relating boundary laws to single-site marginals of the 
measures shows. 
This underlines that the ferromagnetic 
Soft-Core model is substantially richer than the ferromagnetic Ising model, 
as in the latter such types of states do not exist (while they do exist in the antiferromagnetic 
Ising model.) 
\end{rk}

\begin{proof}  Note that function $\phi(x)$ is decreasing for any
$x>0$. By Lemma \ref{l1}, if $x^*$ satisfies
\begin{equation}\label{rus4.9}
\phi(x^*)=x^*,\ \ \phi(x^*)<-1,
\end{equation}
then (\ref{rus4.6}) has two solutions. From (\ref{rus4.9}), 
using that
$$\left({1+(1+\theta)x^*\over 1+2x^*}\right)^{k-1}={x^*\over \lambda} \cdot {1+2x^*\over 1+(1+\theta)x^*},$$
we get  
\begin{equation}\label{rus4.10}
2(1+\theta)(x^*)^2+(3+\theta-k(1-\theta))x^*+1<0.
\end{equation}
By an analysis of this inequality one can see that it has solution 
$x^*\in (s^-, s^+)$ iff the 	conditions of theorem are satisfied. 
From $\phi(x^*)=x^*$ we get 
\begin{equation}\label{kap}
\lambda=\kappa(x^*):=x^* \cdot \left({1+2x^*\over 1+(1+\theta)x^*}\right)^k.
\end{equation}

We have (since $\theta<1$)
$$\kappa'(x)={(1+2x)^{k-1}\over (1+(1+\theta)x)^{k+1}}\cdot [(1+2x)(1+(1+\theta)x)+(1-\theta)kx]>0.$$
Therefore, from (\ref{kap}), by $x^*\in (s^-, s^+)$ we get 
$$\lambda\in (\kappa(s^-), \kappa(s^+))=(\lambda^{-}(k,\theta), \lambda^{+}(k,\theta)).$$
This completes the proof.
\end{proof}
\begin{figure}[!htb]
	\includegraphics[width=0.7\textwidth]{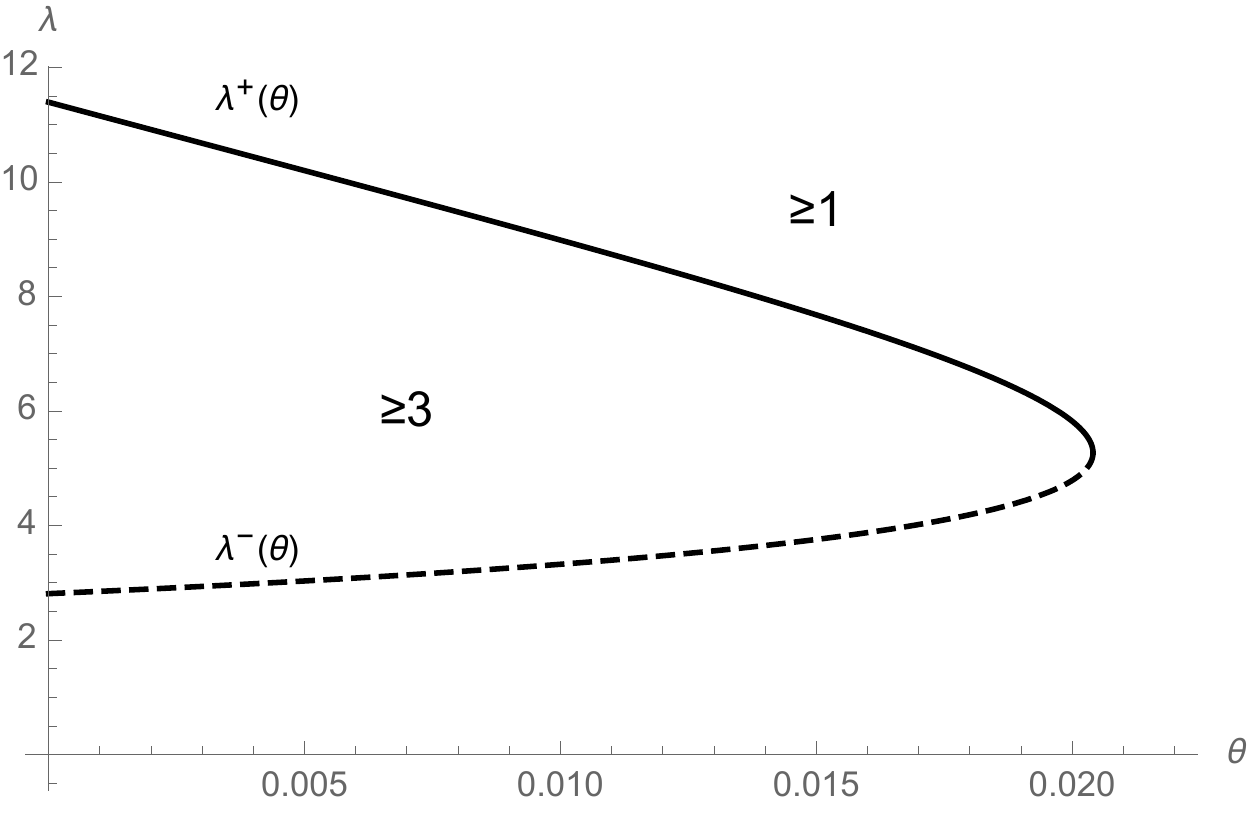}
	\caption{Curves of $\lambda^+$ and $\lambda^-$ for $k=6$. Inequalities on the picture corrpesond to the number of periodic SGMs on the domains.}
\end{figure}
\section{Non-Periodic splitting GMs}

In this subsection we shall use a construction similar to the Bleher-Ganikhodjaev construction \cite{Bl1}.

Recall that (\ref{eq: comp}) has the following form
\begin{equation}\label{rus4.1}
\begin{array}{ll}
h_{1,x}=\ln\lambda +\sum_{y\in S(x)}f(h_{1,y}, h_{2,y}, \theta),\\[3mm]
h_{2,x}=\ln\lambda +\sum_{y\in S(x)}f(h_{2,y}, h_{1,y}, \theta).
\end{array}
\end{equation}
where $f(x,y,\theta)=\ln\left({1+e^x+\theta e^y\over 1+e^x+e^y}\right)$.

The following lemma is simple (see Lemma 9 in \cite{Ro13}):

\begin{lemma}\label{lb} The following estimates hold for every $(x,y)\in \mathbb R^2$:
$$\left|{\partial f(x,y,\theta)\over \partial x}\right|\leq {|1-\sqrt{\theta}|\over 1+\sqrt{\theta}},
 \ \ \left|{\partial f(x,y,\theta)\over \partial y}\right|\leq {|1-\sqrt{\theta}|\over 1+\sqrt{\theta}},$$

$$|f(x,y,\theta)-f(u,y,\theta)|\leq {|1-\sqrt{\theta}|\over 1+\sqrt{\theta}}|x-u|.$$
\end{lemma}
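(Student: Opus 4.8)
The plan is to compute the two partial derivatives of $f(x,y,\theta)=\ln\left(\frac{1+e^x+\theta e^y}{1+e^x+e^y}\right)$ directly and then bound them uniformly in $(x,y)\in\mathbb R^2$. Writing $A=1+e^x$, $B=e^y$ for brevity, we have $f=\ln(A+\theta B)-\ln(A+B)$, so
\[
\frac{\partial f}{\partial x}=e^x\left(\frac{1}{A+\theta B}-\frac{1}{A+B}\right)=\frac{e^x B(1-\theta)}{(A+\theta B)(A+B)},
\]
and similarly
\[
\frac{\partial f}{\partial y}=\frac{\theta B}{A+\theta B}-\frac{B}{A+B}=\frac{AB(\theta-1)}{(A+\theta B)(A+B)}.
\]
Thus both partials have absolute value of the form $\frac{|1-\theta|\cdot C B}{(A+\theta B)(A+B)}$, with $C=e^x<A$ in the first case and $C=A$ in the second; so in both cases $|\partial f/\partial\cdot|\le \frac{|1-\theta|\,A B}{(A+\theta B)(A+B)}$.

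The remaining step is the elementary inequality $\frac{|1-\theta|\,AB}{(A+\theta B)(A+B)}\le\frac{|1-\sqrt\theta|}{1+\sqrt\theta}$ for all $A,B>0$ and $\theta>0$. Factoring $1-\theta=(1-\sqrt\theta)(1+\sqrt\theta)$, this is equivalent to $\frac{(1+\sqrt\theta)^2 AB}{(A+\theta B)(A+B)}\le 1$, i.e.\ $(1+\sqrt\theta)^2 AB\le (A+\theta B)(A+B)=A^2+(1+\theta)AB+\theta B^2$, i.e.\ $0\le A^2-2\sqrt\theta\,AB+\theta B^2=(A-\sqrt\theta\,B)^2$, which is obviously true. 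This gives the two derivative bounds; the Lipschitz estimate $|f(x,y,\theta)-f(u,y,\theta)|\le\frac{|1-\sqrt\theta|}{1+\sqrt\theta}|x-u|$ then follows immediately from the mean value theorem applied to $x\mapsto f(x,y,\theta)$ together with the bound on $|\partial f/\partial x|$.

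I do not anticipate a genuine obstacle here: the only point requiring slight care is recognizing that the natural bound $\frac{|1-\theta|AB}{(A+\theta B)(A+B)}\le\frac14$ (obtained from AM–GM on $A+\theta B\ge 2\sqrt\theta AB$ type estimates) is not tight enough, and that one must instead complete the square as above to land exactly on $\frac{|1-\sqrt\theta|}{1+\sqrt\theta}$. One should also note the degenerate case $\theta=1$, where $f\equiv 0$ and both sides vanish, so the statement holds trivially; for $\theta\neq 1$ the argument above applies verbatim. For completeness one can remark that the bound is attained in the limit as $A/B\to\sqrt\theta$, showing the constant is optimal, though this is not needed for the statement.
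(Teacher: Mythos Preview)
Your proof is correct. The partial derivative computations are right, the reduction to the inequality $(1+\sqrt\theta)^2 AB\le (A+\theta B)(A+B)$ is clean, and the completion of the square $(A-\sqrt\theta B)^2\ge 0$ is exactly the right move; the Lipschitz bound then follows by the mean value theorem as you say.

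There is nothing to compare against in the paper itself: the authors do not prove this lemma but simply cite it as Lemma~9 of \cite{Ro13}. Your argument is a self-contained direct proof and is more informative than a bare citation. One minor remark: the side comment about the AM--GM bound $\tfrac14$ is a bit misleading, since that bound only applies after already pulling out the $|1-\theta|$ factor and using $A+\theta B\ge 2\sqrt{\theta AB}$, $A+B\ge 2\sqrt{AB}$; done that way one gets $\frac{|1-\theta|}{4\sqrt\theta}$, which is actually weaker than (and implied by) your sharp bound. This is only a stylistic point and does not affect the validity of the proof.
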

We show that the system of equations (\ref{rus4.1}) has uncountably many
non-translational-invariant solutions.

Take an arbitrary infinite
path $\pi=\{x^0=x_0<x_1<x_2<...\}$ on the Cayley tree starting at
the origin $x_0=x^0$. Establish a 1-1 correspondence between such
paths and real numbers $t\in [0,1]$. Write $\pi=\pi(t)$
when it is desirable to stress the dependence upon $t$. Map path
$\pi$ to a vector function $h^{\pi}:\;x\in V\mapsto h_x^{\pi}$ satisfying
(\ref{rus4.1}). Note that $\pi$ splits the Cayley tree $\Gamma^k$ into two subgraphs
$\Gamma_1^k$ and $\Gamma_2^k$.

Under the non-uniqueness conditions of Theorem \ref{tkk} the function $h^{\pi}$ is  defined by
\begin{equation}\label{rus5.1}
h_{x}^{\pi}=\left\{\begin{array}{ll}
(\ln(x_1^*), \ln(x_2^*)), \ \ \mbox{if} \ \ x\in \Gamma^k_1,\\[3mm]
(\ln(x_2^*), \ln(x_1^*)), \ \ \mbox{if}\ \  x\in \Gamma^k_2,
\end{array}\right.
\end{equation}
where $x^*_1$ and $x^*_2$ are distinct solutions of the system (\ref{rus3.5}).

If ${2(1-\sqrt{\theta})\over 1+\sqrt{\theta}}<1$, i.e., $\theta>{1\over 9}$ then
one can use Lemma \ref{lb} to prove the following (see \cite{Bl1} or Section 2.6 in \cite{Ro}).

\begin{thm}\label{rust7}  If $k\geq 2$, ${1\over 9}<\theta<\theta_c={k-1\over k+1}$ and $\l>\lambda_{\rm cr}(k)$ then for any infinite path $\pi$ there
exists a unique function $h^{\pi}$ satisfying (\ref{rus4.1})  and
(\ref{rus5.1}).
\end{thm}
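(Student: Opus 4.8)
The plan is to realize $h^{\pi}$ as a fixed point of a contraction on a suitable complete metric space of vector functions on the tree, following the Bleher--Ganikhodjaev scheme. First I would fix the path $\pi=\pi(t)$ and regard the right-hand side of \eqref{rus4.1} as a map $\Phi$ acting on the space $\mathcal{H}$ of all functions $h:V\to\mathbb{R}^2$ that agree with the ``boundary values'' \eqref{rus5.1} along $\pi$: more precisely, for each vertex $x$ not on $\pi$, the values $h_x$ are forced by \eqref{rus5.1} since $x$ lies in $\Gamma_1^k$ or $\Gamma_2^k$, while the genuinely free variables are the values $h_{x_n}$ at the vertices $x_n\in\pi$. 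So I would set up $\mathcal{H}$ as the space of sequences $(h_{x_n})_{n\ge 0}\in(\mathbb{R}^2)^{\mathbb{N}}$ equipped with the sup-metric $d(h,h')=\sup_{n\ge 0}\|h_{x_n}-h'_{x_n}\|_\infty$, restricted to the bounded set where each $h_{x_n}$ lies in the compact region $[\ln(\lambda\theta^k),\ln\lambda]^2$ guaranteed by Proposition \ref{rusp1}; this set is closed in the sup-metric, hence complete.

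Next I would check that $\Phi$ maps $\mathcal H$ into itself and is a contraction. Invariance follows from the a priori bounds of Proposition \ref{rusp1} applied coordinatewise (each $f(\cdot,\cdot,\theta)+\ln\lambda$ takes values in the right interval). For the contraction estimate, fix $x_n\in\pi$; its successors are $x_{n+1}\in\pi$ together with $k-1$ successors off the path, whose $h$-values are \emph{frozen} by \eqref{rus5.1} and therefore contribute nothing to $\|\Phi(h)_{x_n}-\Phi(h')_{x_n}\|$. Hence only the single successor $x_{n+1}$ on the path enters, and applying the two Lipschitz bounds of Lemma \ref{lb} to the two arguments of $f$ gives
\begin{equation*}
\|\Phi(h)_{x_n}-\Phi(h')_{x_n}\|_\infty\le \frac{2|1-\sqrt\theta|}{1+\sqrt\theta}\,\|h_{x_{n+1}}-h'_{x_{n+1}}\|_\infty\le \frac{2(1-\sqrt\theta)}{1+\sqrt\theta}\,d(h,h').
\end{equation*}
Taking the supremum over $n$ and using the hypothesis $\theta>1/9$, equivalently $\frac{2(1-\sqrt\theta)}{1+\sqrt\theta}<1$, shows $\Phi$ is a contraction. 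By the Banach fixed point theorem there is a unique $h^{\pi}\in\mathcal H$ with $\Phi(h^\pi)=h^\pi$; extending it by \eqref{rus5.1} off $\pi$ gives the unique solution of \eqref{rus4.1} satisfying \eqref{rus5.1}. Finally I would note that the conditions $k\ge 2$, $\theta<\theta_c(k)=\frac{k-1}{k+1}$ and $\lambda>\lambda_{\rm cr}(k)$ are exactly those of Theorem \ref{tkk}, which guarantee that the two solutions $x_1^*\ne x_2^*$ of \eqref{rus3.5} used in \eqref{rus5.1} actually exist, so that the construction is non-vacuous.

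\textbf{Main obstacle.} The only delicate point is the bookkeeping that, at each vertex of the path, exactly one successor is ``free'' and the remaining $k-1$ are frozen by \eqref{rus5.1}, so that the branching factor does \emph{not} enter the contraction constant (otherwise one would get $\frac{k|1-\sqrt\theta|}{1+\sqrt\theta}$, which is $\ge 1$ for the relevant $\theta$). Making this precise requires checking that \eqref{rus5.1} is genuinely consistent with \eqref{rus4.1} at off-path vertices --- i.e., that a vertex all of whose forward cone lies in $\Gamma_i^k$ indeed reproduces the constant value $(\ln x_i^*,\ln x_{3-i}^*)$ under the recursion --- which is immediate from the fact that $(x_1^*,x_2^*)$ and $(x_2^*,x_1^*)$ solve the translation-invariant system \eqref{rus3.5}. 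Everything else is a routine application of Lemma \ref{lb}, Proposition \ref{rusp1}, and the Banach fixed point theorem, exactly as in \cite{Bl1} and Section 2.6 of \cite{Ro}.
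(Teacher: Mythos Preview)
Your proposal is correct and is essentially the approach the paper has in mind: the paper itself gives no detailed argument but simply notes that the condition $\theta>1/9$ makes the constant $\frac{2(1-\sqrt\theta)}{1+\sqrt\theta}$ from Lemma~\ref{lb} strictly less than $1$ and refers to \cite{Bl1} and Section~2.6 of \cite{Ro} for the Bleher--Ganikhodjaev construction. Your Banach fixed-point formulation on sequences along $\pi$, with the $k-1$ off-path successors frozen by \eqref{rus5.1}, is a clean way to package exactly that argument, and your identification of the ``main obstacle'' (that the branching factor $k$ must not enter the Lipschitz constant) is precisely the point of the construction.
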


The vector functions $h^{\pi(t)}$ are different for different  $t \in [0;1].$
Now let $\mu(t)$ denote the splitting Gibbs measure corresponding to function
$h^{\pi(t)}$, $t\in [0;1].$ Thus we have the following:

\begin{thm}\label{rust8} If conditions of Theorem \ref{rust7} are satisfied then
for any  $t\in [0;1]$, there exists an extreme Gibbs
measure $\mu(t)$. Moreover, the splitting Gibbs measures $\mu^*_1$, $\mu^*_2$ (see
Theorem \ref{t<} and \ref{tk3}) are specified as $\mu(0)=\mu^*_1$ and
$\mu(1)=\mu^*_2.$
\end{thm}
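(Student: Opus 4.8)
The plan is to combine two ingredients: the Bleher--Ganikhodjaev uncountability argument (Theorem \ref{rust7}) together with the general fact that extreme decomposition of Gibbs measures is available, and then to use the monotonicity and lattice structure of the Widom--Rowlinson model to pin down which of these measures are extreme and to identify the endpoints $\mu(0)$ and $\mu(1)$.

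First I would recall that under the hypotheses ${1\over 9}<\theta<\theta_c(k)$ and $\lambda>\lambda_{\rm cr}(k)$, Theorem \ref{rust7} guarantees that for each path $\pi(t)$, $t\in[0,1]$, there is a unique vector function $h^{\pi(t)}$ solving \eqref{rus4.1} and agreeing with the prescribed boundary data \eqref{rus5.1}; via Theorem \ref{cc} (compatibility) and Kolmogorov's extension theorem this produces a splitting Gibbs measure $\mu(t)$. One must check that distinct $t$ give distinct measures: this follows because the single-site marginals of $\mu(t)$ at a vertex $x$ are explicit functions of $h^{\pi(t)}_x$, and two distinct paths differ on a cofinite portion of the tree, forcing the corresponding $h$-values (hence the marginals on a suitable finite region) to differ; since $x_1^*\neq x_2^*$ this distinction is genuine. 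This already gives uncountably many non-translation-invariant splitting Gibbs measures.

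Next I would establish extremality of each $\mu(t)$. Here the cleanest route is the monotonicity/FKG structure already invoked in the previous section: the Widom--Rowlinson model (after identifying $\pm$ with $\pm 1$ and holes with $0$) is monotone, so to each boundary law one can associate a limiting measure obtained from monotone boundary conditions, and the extreme points of the simplex of Gibbs measures are sandwiched between $\mu^*_1$ and $\mu^*_2$. For the path-indexed measures, the standard argument of Bleher--Ganikhodjaev (see \cite{Bl1} or Section 2.6 of \cite{Ro}) shows that each $\mu(t)$ is a limit of finite-volume Gibbs measures with deterministic boundary condition dictated by $\pi(t)$, namely taking the value $x_2^*$ along the path-side $\Gamma^k_1$ and $x_1^*$ on $\Gamma^k_2$; the contraction estimate of Lemma \ref{lb}, valid precisely because ${2(1-\sqrt\theta)/(1+\sqrt\theta)}<1$, shows the boundary influence decays, and a tail-triviality argument then yields that $\mu(t)$ is extreme. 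Finally, the identification $\mu(0)=\mu^*_1$ and $\mu(1)=\mu^*_2$ comes from the fact that $t=0$ and $t=1$ correspond to the two ``extremal'' paths for which the subgraph $\Gamma^k_1$ (respectively $\Gamma^k_2$) degenerates, so $h^{\pi}$ becomes the constant function $(\ln x_1^*,\ln x_2^*)$ or $(\ln x_2^*,\ln x_1^*)$, i.e. exactly the translation-invariant solutions producing $\mu^*_1$ and $\mu^*_2$ of Theorem \ref{t<} and Theorem \ref{tk3}.

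The main obstacle I expect is the extremality claim: producing uncountably many distinct splitting Gibbs measures is routine once Lemma \ref{lb} and Theorem \ref{rust7} are in hand, but showing each $\mu(t)$ is \emph{extreme} (and not merely a splitting Gibbs measure, which by \cite[Theorem 12.6]{Ge} is only a necessary condition) requires either a tail-triviality estimate along the path or a disagreement-percolation / reconstruction-threshold type bound; this is exactly where the hypothesis $\theta>1/9$ enters, guaranteeing the contraction coefficient $2(1-\sqrt\theta)/(1+\sqrt\theta)<1$ needed to run the Bleher--Ganikhodjaev scheme and conclude that the boundary data along $\pi$ determines the measure, hence its triviality on the relevant tail $\sigma$-algebra. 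Once that estimate is secured, the remaining steps — distinctness for distinct $t$ and the endpoint identification — are short computations with the single-site marginals.
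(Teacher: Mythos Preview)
Your proposal is correct and follows the same approach as the paper. In fact the paper gives essentially no proof of this theorem at all: it simply notes that the $h^{\pi(t)}$ are distinct for distinct $t$, defines $\mu(t)$ as the corresponding splitting Gibbs measure, states the theorem, and implicitly defers everything (existence, extremality, endpoint identification) to the Bleher--Ganikhodjaev construction in \cite{Bl1} and Section~2.6 of \cite{Ro}. Your outline --- build $\mu(t)$ from $h^{\pi(t)}$ via Theorem~\ref{cc}, use the contraction bound of Lemma~\ref{lb} (this is where $\theta>1/9$ enters) to run the Bleher--Ganikhodjaev scheme, obtain $\mu(t)$ as a limit of finite-volume Gibbs measures with the deterministic boundary condition dictated by \eqref{rus5.1}, conclude tail-triviality and hence extremality, and identify the endpoints with the constant boundary laws $(\ln x_1^*,\ln x_2^*)$ and $(\ln x_2^*,\ln x_1^*)$ --- is exactly what those references do and is more explicit than the paper itself. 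The only cosmetic point is that your invocation of FKG/monotonicity is not really needed here; the extremality comes purely from the deterministic-boundary/tail-triviality argument, and the sandwiching $\mu_1^*\le\mu\le\mu_2^*$ from Section~5 plays no role in this particular proof.
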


Note that the measures $\mu(t)$ are different for different $t\in [0,1]$, therefore
we obtain a continuum of distinct splitting Gibbs measures which are
non-periodic.

 \section*{ Acknowledgements}

S. Kissel and U.A. Rozikov thank the  RTG 2131, the research training
group on High-dimensional Phenomena in Probability - Fluctuations and Discontinuity
 and the Ruhr-University Bochum (Germany).

\end{document}